\newtheorem{maintheorem}{Theorem}
\newtheorem{theorem}{Theorem}[section]
\newtheorem{prop}[theorem]{Proposition}
\newtheorem{lemma}[theorem]{Lemma}
\newtheorem{cor}[theorem]{Corollary}
\theoremstyle{definition}
\newcounter{tenumerate}
\def\P{\mathbb{P}}
\newcommand{\one}{\1}
\newcommand{\deq}{\stackrel{\scriptscriptstyle\triangle}{=}}
\renewcommand{\epsilon}{\varepsilon}
\newcommand{\1}{\mathbf{1}}
\newcommand{\R}{{\mathbb R}}
\newcommand{\E}{{\mathbb E}}
\newcommand{\remove}[1]{}
\renewcommand{\leq}{\leqslant}
\renewcommand{\geq}{\geqslant}
\def\XXint#1#2#3{{\setbox0=\hbox{$#1{#2#3}{\int}$}
\vcenter{\hbox{$#2#3$}}\kern-.5\wd0}}
\begin{document}

\title{{\bf Distances in critical long range percolation}}

\author{Jian Ding\thanks{Partially supported by NSF grant DMS-1313596.}\\ University of Chicago \and Allan Sly\thanks{Partially supported by a Sloan Research Fellowship in mathematics and by NSF award DMS-1208339.} \\ UC Berkeley}
\maketitle

\begin{abstract}
We study the long range percolation model on $\mathbb{Z}$ where sites $i$ and $j$ are connected with probability  $\beta |i-j|^{-s}$.
Graph distances are now well understood for all exponents $s$ except in the case $s=2$ where the model exhibits non-trivial self-similar scaling.
Establishing a conjecture of Benjamini and  Berger~\cite{BenBer:01}, we prove that the typical distance from site $0$ to $n$ grows as a power law $n^{\theta(\beta)}$ up to a multiplicative constant for some exponent $0<\theta(\beta)<1$ as does the diameter of the graph on a box of length $n$.
\end{abstract}

\section{Introduction}


The long range percolation model (LRP) is a model of percolation on $\mathbb{Z}$, where edges between sites $x,y\in \mathbb{Z}$ are placed independently with probability $\mathtt{P}(|x-y|)$ where $\mathtt{P}(r)\sim\beta r^{-s}$ for parameters $s,\beta>0$.  The geometric behavior of such percolation clusters is now well understood in terms of the typical graph distances and diameters~\cite{BenBer:01,Biskup:04,CGS:02} with the notable exception of $s=2$.  For this exponent, however, the model has a natural self-similar structure which ensures that edges on all scales are important in determining distances.  Previously, only non-matching polynomial bounds were known for a restricted range of $\beta$~\cite{CGS:02}.

We will consider geometric properties of the cluster in the case of $s=2$.  To ensure that an infinite component exists we will set $\mathtt{P}(1)=1$  so that all of the nearest neighbour edges present.  The scaling of distances is quite sensitive to the particular form of $\mathtt{P}$  so we set $\mathtt{P}(r) = \beta r^{-2} \wedge 1$ when $r>1$.

We denote by $d^*_{\mathrm{LRP}}(x, y)$ the random metric generated by the LRP (i.e., the graph distance), where each edge, both nearest neighbor and long edges, are counted as weight 1. We further consider a constrained distance $d_{\mathrm{LRP}}(x, y)$ to be the distance from $x$ to $y$ on the subgraph induced by the vertices in $[x,y]$. Finally, we denote by $\mathrm{Diam}_{\mathrm{LRP}}(x, y)$ the diameter of the subgraph induced by the vertices in $[x,y]$. Much of the difficulty in this model is that these distances are not concentrated but are tight when properly renormalized. We thus denote asymptotic scaling in probability by $A_n\asymp_P B_n$ if for all $\epsilon>0,\exists c, C>0$ we have that $\P(cB_n \leq A_n\leq CB_n) \geq 1-\epsilon$ for all $n\in \mathbb{N}$.
Our main result is that all these quantities scale asymptotically as a power-law whose exponent is a function of $\beta$.
\begin{maintheorem}\label{thm-1}
For all $\beta>0$, there exists $\theta= \theta(\beta) \in (0, 1)$ such that
\[
d^*_{\mathrm{LRP}}(0, n) \asymp_P d_{\mathrm{LRP}}(0, n) \asymp_P \mathrm{Diam}_{\mathrm{LRP}}(0, n) \asymp_P n^\theta\,.
\]
\end{maintheorem}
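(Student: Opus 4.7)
The plan rests on the scale-invariance special to $s=2$: partitioning $[0, Kn]$ into $K$ consecutive blocks $B_1,\ldots,B_K$ of length $n$, the probability that at least one LRP edge connects $B_i$ to $B_j$ is $1-\exp(-\beta/|i-j|^2)\asymp \beta |i-j|^{-2}$, independently of the block scale $n$. Hence the coarse ``block graph'' on $\{1,\ldots,K\}$ is stochastically comparable, up to constants, to an independent copy of the LRP on $[0,K]$. The whole argument exploits this self-similarity to establish approximate multiplicativity of a suitable quantile of the distance, and a Fekete-type argument then produces the exponent $\theta$.

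For the upper bound, let $D_\alpha(n)$ be the $\alpha$-quantile of $\mathrm{Diam}_{\mathrm{LRP}}(0,n)$. Using the block comparison, a shortest path between $B_1$ and $B_K$ in the coarse graph, which has length $\asymp D_\alpha(K)$ with high probability, lifts to a path in the original model by traversing at most $D_\alpha(n)$ steps inside each visited block. This yields the submultiplicative estimate $D_\alpha(Kn)\le C D_\alpha(K) D_\alpha(n)$ after absorbing error probabilities into $\alpha$. Iterating gives $\log D_\alpha(n)/\log n \to \theta$ with $\theta \le 1$, and strict inequality $\theta<1$ follows because with probability bounded away from zero there is a single long edge of length $\asymp n$, which saves a constant fraction of the distance at every scale.

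The lower bound is the main obstacle. I would prove the matching supermultiplicative estimate $D_\alpha(Kn)\ge c D_\alpha(K) D_\alpha(n)$ by induction on scale. Any path $\gamma$ from $0$ to $Kn$ projects onto a walk in the coarse block graph from $B_1$ to $B_K$, which with high probability must have at least $c D_\alpha(K)$ steps, so $\gamma$ contains that many inter-block edges. Between consecutive inter-block edges the path lies within a single block $B_i$, and its two endpoints in $B_i$ are, conditionally on the set of long edges crossing $B_i$, approximately uniform and therefore at typical distance $\asymp n$; by induction the corresponding intra-block subpath has length $\ge c D_\alpha(n)$. Multiplying gives the bound. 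The principal technical difficulty is closing the induction at every one of the $\log n$ scales simultaneously: this forces one to work with high quantiles rather than means, to obtain polynomial-rate tail bounds on the distance distribution, and to carefully control the conditional law of long-edge endpoints inside each block, ruling out pathological configurations where many inter-block edges concentrate near the same point.

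Finally, the chain $d^*_{\mathrm{LRP}}(0,n) \le d_{\mathrm{LRP}}(0,n) \le \mathrm{Diam}_{\mathrm{LRP}}(0,n)$ is automatic, so combining the previous two estimates gives $d^*_{\mathrm{LRP}}(0,n) \asymp_P d_{\mathrm{LRP}}(0,n) \asymp_P \mathrm{Diam}_{\mathrm{LRP}}(0,n) \asymp_P n^\theta$. The upper bound on the diameter in particular requires a uniform version, $\P(d_{\mathrm{LRP}}(x,y)\le C|x-y|^\theta)\ge 1-|x-y|^{-3}$ over all pairs $x,y\in[0,n]$, obtained by running the submultiplicative recursion with a sufficiently high quantile and then applying a union bound over the $O(n^2)$ pairs.
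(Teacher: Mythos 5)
Your high-level strategy --- multi-scale renormalization, sub- and super-multiplicativity for a suitable functional of the distance, and a Fekete-type limit --- matches the paper's philosophy, and your submultiplicative upper bound and the observation $d^*_{\mathrm{LRP}}\le d_{\mathrm{LRP}}\le \mathrm{Diam}_{\mathrm{LRP}}$ are fine. The supermultiplicative lower bound is where essentially all the work lies, and the argument as you describe it has a gap that is precisely the crux of the problem.

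The inductive step ``between consecutive inter-block edges the path lies within one block, and by induction the intra-block subpath has length $\geq cD_\alpha(n)$'' applies the inductive hypothesis to a subpath whose endpoints are chosen by the path itself, and the number of candidate coarse paths of a given length is exponential in $D_\alpha(K)$. To make the union bound close you need two ingredients that are absent from the proposal: (i) a quantitative enumeration of coarse paths, in particular bounding how many have a given number of long gaps --- this is the role of the paper's exploration-process Lemma~\ref{lem-exploration}, which gives $\E|\mathcal{P}_t|\le C^t$, $\E|\mathcal{P}_{t,\epsilon}|\le e^{C\sqrt{\epsilon}t}$, and tail bounds on the number of hops; and (ii) a lower bound on the intra-block cost that is uniform over the adversarial choice of entry and exit points, not just ``typical'' ones. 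For (ii) the paper introduces the $\iota$-segment-to-segment distance $d^{\iota}_{(1,M)}$ of~\eqref{eq-def-segment-to-segment}, shows via Lemma~\ref{lem-segment-point} that it has the same order of magnitude as the point-to-point distance, and then applies the BK inequality over the disjoint gap fillings. Your justification --- that the long-edge endpoints inside a block are ``approximately uniform and therefore at typical distance $\asymp n$'' --- is not the right formulation: a bound that holds for a typical pair of endpoints does not hold simultaneously for all pairs some path may select (and indeed a path can always route through long-edge endpoints that happen to be close together unless one controls this), and the estimate must hold at all $\log n$ scales at once, so per-scale errors compound. The paragraph acknowledging ``pathological configurations where many inter-block edges concentrate near the same point'' is exactly the issue, but no mechanism to rule them out is supplied.

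A secondary but genuine issue is the block-graph comparison itself. The probability that $B_i$ and $B_j$ are joined by at least one edge is $1-\exp\big(-\Theta(\beta/|i-j|^2)\big)$, which agrees with $\beta|i-j|^{-2}$ only for large $|i-j|$ and is off by $|i-j|$-dependent constants for small $|i-j|$. Since $\theta=\theta(\beta)$ is $\beta$-dependent, a constant distortion of $\beta$ distorts the exponent, and the supermultiplicative recursion no longer relates the same quantity $D_\alpha$ at both scales. The paper sidesteps this entirely by working with a continuum Poisson model whose rescaling is exact (Lemma~\ref{lem-renormalization}), proving the power law there (Theorem~\ref{thm-cts}), and only then coupling continuous to discrete in Section~\ref{s:ContinuousToDiscrete}. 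To stay purely discrete you would need to show the exponent is stable under the $\beta$-distortion accumulated across $\log n$ renormalization steps, which is an additional argument not sketched here.
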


Our main tool for studying long range percolation will be the  continuous analogue of LRP on~$\mathbb{R}$. Here the set of edges $\mathcal{E}$ is given by a Poisson point process where edges between $x$ and $y$ occur at intensity $\beta |x-y|^{-2}$ with respect to Lesbegue measure on $\mathbb{R}^2$. The presence of an edge between $x$ and $y$ is equivalent to identifying those points so that they are at distance 0.  To ensure that the metric isn't reduced to 0, we truncate the set of edges to those whose length is in the interval $(\delta, \delta')$.  We use the notation $\langle\cdot,\cdot \rangle$ to denote a long edge. By analogy with LRP, we define $d^*_{(\delta, \delta')}(\cdot, \cdot)$ to be the random metric generated by the continuous model. That is,
\begin{equation}\label{eq-def-cts-unconstrained}
d^*_{(\delta, \delta')}(x, y) : = \min_{m \geq 0} \Big\{ |x - u_1|   + \sum_{i=1}^{m-1}|v_i-u_{i+1}|  + |v_m - y|:  \{\langle u_i,v_i\rangle\}_{1\leq i \leq m}\subseteq \mathcal{E}, |u_i - v_i| \in (\delta, \delta')  \Big \} \,,
\end{equation}
where the case of $m=0$ evaluates to $|x-y|$.  This minimization can be  interpreted as over geodesics crossing the edges $\langle u_i,v_i \rangle$ in that order, where the distance corresponds to the sum of the distances between the end point of one long edge and the starting point of the next.  The main object we consider will be a constrained distance $d_{(\delta, \delta')}(x, y)$, where the geodesic must remain within $[x,y]$. Formally, we define $d_{(\delta, \delta')}(x, y)$ as
\begin{equation}\label{eq-def-cts}
\min_{m \geq 0} \Big\{ |x - u_1|   + \sum_{i=1}^{m-1}|v_i-u_{i+1}|  + |v_m - y| :  \{\langle u_i,v_i\rangle\}_{1\leq i \leq m}\subseteq \mathcal{E}, |u_i - v_i| \in (\delta, \delta') , u_i,v_i\in [x,y] \Big \} \,.
\end{equation}
The diameter, $\mathrm{Diam}_{(\delta,\delta')}(x, y)$ is defined as the maximum distance between two points in $[x,y]$ using only edges within $[x,y]$.  In order to derive Theroem~\ref{thm-1}, the following theorem is a major ingredient, to which most of the paper is devoted.
\begin{maintheorem}\label{thm-cts}
For all $\beta>0$, there exists $\theta= \theta(\beta) \in (0, 1)$ such that
\[
d^*_{(1, n)}(0, n) \asymp_P d_{(1, n)}(0, n) \asymp_P \mathrm{Diam}_{(1, n)}(0, n) \asymp_P n^\theta\,.
\]
\end{maintheorem}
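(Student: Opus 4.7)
The plan is to exploit the scale invariance of the Poisson edge process on $\mathbb{R}$: under the dilation $x \mapsto \lambda x$, the intensity $\beta |x-y|^{-2}\,dx\,dy$ is preserved and the truncation window $(\delta,\delta')$ transforms to $(\lambda\delta,\lambda\delta')$, so that
\[
d^*_{(\delta,\delta')}(0,L) \stackrel{d}{=} \lambda\, d^*_{(\delta/\lambda,\,\delta'/\lambda)}(0,L/\lambda),
\]
and likewise for $d$ and $\mathrm{Diam}$. One may therefore view the theorem as a statement about the self-similar object $d_{(1/n,1)}(0,1)$ on the unit interval, whose target scale is $n^{\theta-1}$.

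\textbf{Step 1 (existence of the exponent).} Let $M(n)$ be the median of $d_{(1,n)}(0,n)$. Partitioning $[0,mn]$ into $m$ consecutive length-$n$ blocks and restricting to edges contained in a single block gives the submultiplicative bound $M(mn) \le C\,m\,M(n)$, so that $\log M(n)/\log n$ is bounded and converges along geometric subsequences to some $\theta=\theta(\beta)\in[0,1]$ by a Fekete-type argument; monotonicity in the truncation window extends this to convergence as $n\to\infty$.

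\textbf{Step 2 (strict bounds $0<\theta<1$).} For $\theta<1$: with probability bounded below by $p=p(\beta)>0$, the Poisson process places an edge of length $\asymp n$ joining $[n/4,n/2]$ to $[n/2,3n/4]$, and conditional on such an edge $\langle u,v\rangle$ we have $d_{(1,n)}(0,n) \le d_{(1,n)}(0,u) + d_{(1,n)}(v,n)$ where each summand is dominated in distribution (after the scaling identity) by an independent copy of $D$ at scale at most $n/2$; iterating yields $M(n) \le c\,M(n/2)$ for some $c<2$, forcing $\theta<1$. For $\theta>0$: a first-moment calculation enumerating sequences of admissible jumps $x\to u_1\to v_1 \to \cdots \to y$ against the Poisson intensity $\beta|x-y|^{-2}$ shows that any geodesic must realize at least $n^{\theta_0}$ increments with high probability, for some $\theta_0(\beta)>0$.

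\textbf{Step 3 (tightness and the comparison of the three distances).} To upgrade median-scale convergence to the in-probability equivalence $\asymp_P n^\theta$, one must propagate full tail bounds across scales. The upper tail is obtained by iterating the shortcut construction of Step~2, using near-independence between disjoint spatial scales to get $\P(d_{(1,n)}(0,n) > t\,n^\theta)\to 0$ as $t\to\infty$ uniformly in $n$; the lower tail follows by refining the first-moment enumeration. The comparison $d^*\le d\le\mathrm{Diam}$ is trivial, and since $\mathrm{Diam}_{(1,n)}(0,n)=\max_{0\le x<y\le n} d_{(1,n)}(x,y)$, a union bound against the upper tail estimate gives $\mathrm{Diam}_{(1,n)}(0,n) \le C\, d_{(1,n)}(0,n)$ in probability; the reverse comparison $d_{(1,n)}(0,n) \le C\, d^*_{(1,n)}(0,n)$ in probability follows from showing that excursions of the unconstrained geodesic outside $[0,n]$ provide no asymptotic advantage, since shortcut edges are equally abundant inside $[0,n]$. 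The main obstacle is this renormalization step: as emphasized in the text preceding Theorem~\ref{thm-1}, $d_{(1,n)}(0,n)/n^\theta$ is not concentrated around its mean, so tightness at the correct scale must be maintained through a multi-scale induction that controls the whole tail distribution simultaneously, and reconciling this with the self-similar structure of the $s=2$ edge process is where most of the technical work is expected to lie.
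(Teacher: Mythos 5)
Your Step 1 contains a genuine flaw that breaks the argument. Partitioning $[0,mn]$ into $m$ blocks of length $n$ and concatenating block geodesics gives the \emph{additive} bound $M(mn)\le m\,M(n)$, but this is not the kind of estimate Fekete's lemma needs: it only yields $\log M(n)/\log n \lesssim 1$ and does not force $\log M(n)/\log n$ to converge to anything. For a subadditive/Fekete argument one needs genuine \emph{multiplicative} submultiplicativity $M(mn)\le C\,M(m)\,M(n)$, so that $\log M(\cdot)$ becomes subadditive on the multiplicative semigroup. The paper obtains exactly this (Lemma~\ref{lem-submultiplicative}), but via a much more delicate two-scale construction: take the coarse geodesic for $d_{(\delta,1)}(0,1)$, break its gaps into at most $D/\delta + K$ intervals of length $\le\delta$, and fill each with an independent fine-scale geodesic; the key input is the exploration-process bound (Lemma~\ref{lem-exploration}) controlling the number of hops $K$ so that $\E K \lesssim \E D/\delta$. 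Nothing in your block decomposition produces the factor $M(m)$ on the right-hand side.

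Your Step 3 correctly diagnoses the central difficulty — non-concentration means you must propagate the full tail across scales — but it stops at stating the problem rather than solving it. The paper's resolution occupies all of Section~\ref{s:SecondMoment}: one defines $(\gamma,\theta')$-good scales, shows the second (and higher) moments are controlled by the first at good scales (Lemma~\ref{lem-second-moment-good}, Lemma~\ref{lem-moments}), upgrades to ``supergood'' scales with a geometric abundance of good scales between them (Lemma~\ref{lem-super-good}), and uses segment-to-segment distances together with a BK-inequality union bound over paths to get a quantitative \emph{lower} tail and genuine super-multiplicativity (Lemmas~\ref{lem-distance-tight} and \ref{lem-super-multiplicative}). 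Your sketch provides no mechanism for any of this: in particular, your union bound for the diameter over $\Theta(n^2)$ pairs requires polynomially fast tail decay, which the paper gets from arbitrary-order moment bounds (Lemma~\ref{lem-moments}), not merely from $\P(d>tn^\theta)\to 0$ as $t\to\infty$. Similarly, the comparison $d_{(1,n)}\lesssim d^*_{(1,n)}$ is handled in the paper as part of the lower-tail argument at supergood scales, not by a soft ``excursions give no advantage'' claim. Steps 2's ideas (shortcut edge at the macroscopic scale for $\theta<1$; first-moment enumeration of short paths for $\theta>0$) do align with the paper's Proposition~\ref{prop-theta-0-1} and Lemma~\ref{lem-lower-bound-distance}, but without the corrected Step~1 and the second-moment machinery of Section~\ref{s:SecondMoment} the proposal does not reach the theorem.
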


Our methods do not give an explicit formula for $\theta(\beta)$. Instead, it arises from a Sub-additive Ergodic Theorem argument on the expected distance between points in the constrained distance $d_{(1, n)}(0,n)$ which we show is sub-multiplicative up to constants.  The bulk of the work then falls to showing that it is also super-multiplicative up to constants.  We first establish that for a suitable range of ``good'' $n_i$  we can control the second moment $d_{(1, n_i)}(0,n_i)$ in terms of the first. Thus, we could apply the second moment method to get a lower bound on the probability that the distance is of the same order as its expectation.  By considering such distances across a range of scales we can boost this probability  arbitrarily close to 1.  This finally allows us to take a union bound over paths and establish the required super-multiplicativity.

A key tool in each of these steps is an enumeration over possible paths which allows us to take union bounds  given suitable large deviation estimates. For controlling the second moment of the distance in terms of the first, we  break an interval into a collection of sub-intervals. Then we show that with good probability we can avoid the worst sub-interval and use moment inequalities of order statistics.  We complete the proof by coupling the continuous model with the discrete long range percolation model to derive the required estimates for Theorem~\ref{thm-1}.

\subsection{Background}

Long range percolation was first considered in the 1980's as a one dimensional model which can exhibit phase transition.  The initial question addressed involved the existence of an infinite component when the connection probabilities are all strictly less than one.  Schulman~\cite{Schulman:83} showed that there is no infinite component when $s>2$ while Newman and Schulman~\cite{NewSch:86} showed that if $s<2$ or if $s=2$ and $\beta$ is large and $\mathtt{P}(1)$ is increased to sufficiently close to 1 then there is percolation.  The case $s=2$ proved to be  the most delicate, in which  Aizenman and Newman~\cite{AizNew:86} showed that $\beta=1$ is the critical value with no  percolation for $\beta\leq 1$ and percolation for $\beta>1$ provided that $\mathtt{P}(1)$ is sufficiently close to 1.

More recently more geometric properties were considered, particularly due to the interest in ``small world'' networks (see e.g.~\cite{Kleinberg:00,Milgram:67,WatStr:98}).   In this line of research, it is assumed that $\mathtt{P}(1) = 1$ in order to guarantee the connectedness. Benjamini and Berger~\cite{BenBer:01} initiated the study of the scaling of distances proving bounds in a range of regimes and conjecturing the appropriate scaling in each regime.  When $d=1$ these have been verified in every case except $s=2$.  When $s<1$, it was shown by Benjamini, Kesten, Peres, and  Schramm \cite{BKPS:04} that the diameter on the block of vertices $\{0,\ldots,n\}$  is bounded by a constant with high probability and the typical degree grows polynomially.  In the case $s=1$ the typical distance and diameter is of order $\frac{\log n}{\log\log n}$, as proved by Coppersmith, Gamarnik, and  Sviridenko \cite{CGS:02}.  Using a multiscale construction, Biskup~\cite{Biskup:04,Biskup:10} showed that when $1< s < 2$ the diameter grows polylogarithmically as $(\log n)^{\delta+o(1)} $ for an explicit $\delta=\frac{\log 2}{\log (2/s)}$.  Finally when $s>2$ the extra edges are too sparse to significantly affect the diameter which grows linearly in $n$~\cite{BenBer:01}.

In the case of $s=2$, a polynomial upper bound of $n^{\theta_2(\beta)}$ on the diameter and a polynomial lower bound of $n^{\theta_1(\beta)}$ when $\beta<1$, was proved in \cite{CGS:02}.  When $\beta<1$ it is straightforward to verify that with high probability there are at least $n^{1-\beta-o(1)}$ edges $(i,i+1)$ such that there is no other connection from $[0,i]$ to $[i+1,n]$ hence providing a lower bound on the diameter.  We extend this lower bound to all $\beta$ and, moreover, show that the growth is given by a single exponent $\theta(\beta)$.

\subsection{Open Problems}

One might ask if more can be said about the metric structure  of long range percolation.  There has been considerable interest recently in scaling limits of large random graphs such as critical random graphs~\cite{ABG:12}, minimal spanning trees~\cite{ABGM:13} and random planar maps~\cite{LeGall:11,Miermont:11}.  When $s=2$ the self-similar structure (for the continuous analog)   and  polynomial scaling of LRP suggests a scaling limit in the Gromov-Hausdorff sense to a random self-similar metric space.

Another line of inquiry is on the evolution of random walks on such  networks.  Benjamini,  Berger, and  Yadin~\cite{BBY:08} showed that when $1\leq s <2$, the spectral gap on LRP on a cycle of length $n$ scales like $n^{-(s-1)}$ and then jumps discontinuously to $n^{-2}$ for $s>2$.  It is natural to expect that the correct scaling when $s=2$ is $n^{\chi(\beta)}$  interpolating between $n^{-1}$ and $n^{-2}$.

Finally, one may ask about the scaling limit of random walks on the LRP cluster.  It was shown by Crawford and Sly \cite{CraSly:09} that when $1<s<2$ the scaling limit is $(s-1)$-stable motion while when $s>2$ it is Brownian motion.  It is natural to suspect that when $s=2$ there  is a joint scaling limit of the metric and the walk to a self-similar metric space together with a diffusion on the space.

\subsection{Organization of the paper}
In Section~\ref{sec:explore} we describe our exploration process for enumerating over paths in the graph.  In Section~\ref{s:firstmoment} we prove the submultiplicativity of the expected distance and show that the resulting power law is strictly between 0 and 1 for all $\beta$.  Then in Section~\ref{s:SecondMoment} we bound the second moment of distances and establish super-multiplicativity from which we deduce Theorem~\ref{thm-cts}.
Finally in Section~\ref{s:ContinuousToDiscrete} we couple the continuous and discrete models establishing Theorem~\ref{thm-1}.

\subsection{Acknowledgements}
The authors would like to thank Nick Crawford, Noam Berger and Marek Biskup for useful conversations. This work was initiated while J. D. was visiting MSRI.

\section{An exploration process argument} \label{sec:explore}
We first demonstrate some a priori properties on potential geodesics in the long range percolation by an argument in the flavor of an exploration process. For an integer $m$, denote by $[m]= \{1, 2, \ldots, m\}$. For a path $P = z_0, \ldots, z_m$ in the metric $d_{(\delta, \delta')}(\cdot, \cdot)$, let
\[
I = I_P := \{  i\in [m]: \exists\mbox{ an edge between } z_{i-1} \mbox{ and } z_{i}, |z_{i-1} - z_{i}| \in [\delta, \delta']\}\,.
\]
Then (in view of \eqref{eq-def-cts}) the length of $P$ is defined to be $\|P\|_{1} \deq \sum_{i\in [m]\setminus I} |z_{i-1} - z_{i}|$. In addition, for $i\in I$, we define the length of the edge connecting $z_{i-1}$ and $z_{i}$ to be $|z_{i-1} - z_{i}|$ (note that this length is not counted in measuring $\|P\|_1$), and for $i\not\in I$ we say that $(z_{i-1}, z_{i})$ is a gap.  We say a path is \emph{proper} if it does not contain two consecutive gaps and does not reuse a jump.  We say that two proper paths are \emph{equivalent} if they have the same starting point and make the same set of jumps in the same order i.e they  are equal except for the final endpoint.  A shortest path will be called a \emph{geodesic}.

For $t\geq 0$, denote by $\mathcal{P}_t$ the collection of equivalence classes of proper paths starting from the origin $o$ whose length is between $t/2$ and $t$ in metric $d_{(1, \infty)}$. For $P\in \mathcal{P}_t$, denote by $h(P) = |I|$ the number of hops (long range edges) in the path $P$.  Furthermore, for $\epsilon>0$ denote by $\mathcal P_{t, \epsilon}\subset \mathcal P_t$ the collection of equivalence classes of paths $P$ such that the number of gaps whose lengths are at least $\epsilon$ is less than or equal to $\epsilon t$.

\begin{lemma}\label{lem-exploration}
For all $\beta>0$, there exists $C>0$ such that for all $t>0$ and $\alpha\geq C$
 $$\E |\mathcal P_t| \leq C^t \mbox{ and }\P(\exists P\in \mathcal{P}_t: h(P) \geq \alpha t) \leq (C/\alpha)^t\,.$$
 Furthermore, for $\epsilon>0$, we have
 $$\E |\mathcal P_{t, \epsilon}| \leq C\mathrm{e}^{C\sqrt{\epsilon}t} \,.$$
\end{lemma}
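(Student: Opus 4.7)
The approach is a direct enumeration via Campbell's formula. The key structural observation is that, since the equivalence relation forgets the final endpoint, each equivalence class of proper paths from $o$ with $h$ hops is uniquely specified by an ordered $h$-tuple of distinct oriented edges $(u_1,v_1),\ldots,(u_h,v_h)$ from the Poisson edge process, with canonical gap lengths $g_0 = |o - u_1|$ and $g_i = |v_i - u_{i+1}|$ for $1 \le i \le h-1$ summing to $\|P\|_1$. Campbell's formula expresses the expected count as an integral over $\R^{2h}$, and the change of variables to edge displacements $z_i = v_i - u_i$ and gap displacements $y_0 = u_1 - o$, $y_i = u_{i+1} - v_i$ makes the integrand factorize: each edge contributes $\int_{\R}\beta|z|^{-2}\mathbf{1}\{|z|\ge 1\}\,dz = 2\beta$, and the gap variables, summed over signs, contribute $2^h t^h/h!$. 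The expected count with $h$ hops and $\|P\|_1 \le t$ is thus at most $(4\beta t)^h/h!$, and summing over $h$ yields part (a) with $C = e^{4\beta}$.

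For part (b), I would restrict the sum to $h \ge \alpha t$ and use the Stirling bound $h! \ge (h/e)^h$ to dominate the tail by $2(4\beta e/\alpha)^{\alpha t}$, which is at most $(C/\alpha)^t$ once $\alpha$ is a sufficiently large constant depending on $\beta$. For part (c), I would partition each path's gaps into ``big'' (length $\ge \epsilon$, of which there are $k \le \epsilon t$) and ``small'' (length $<\epsilon$); the refined gap integral is then bounded by $\binom{h}{k}\epsilon^{h-k}t^k/k!$, and summing over $h \ge k$ via the identity $\sum_{j\ge 0}\binom{k+j}{k}x^j = (1-x)^{-(k+1)}$ (for $x = 4\beta\epsilon < 1$) reduces the estimate to a truncated Poisson series
\[
\E|\mathcal P_{t,\epsilon}| \le (1 - 4\beta\epsilon)^{-1}\sum_{k \le \epsilon t}\frac{\mu^k}{k!},\qquad \mu := \frac{4\beta t}{1 - 4\beta\epsilon}.
\]
Since the truncation $\epsilon t$ sits well below the Poisson mode $\mu \asymp t$, the partial sum is dominated by its largest term, which by Stirling is at most $(4\beta e/\epsilon)^{\epsilon t} = \exp(\epsilon t\log(1/\epsilon) + O(\epsilon t))$. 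The elementary inequality $\epsilon\log(1/\epsilon) \le (2/e)\sqrt\epsilon$ on $(0,1)$ then yields $e^{C\sqrt\epsilon t}$; for $\epsilon$ bounded away from zero, the trivial bound from part (a) already suffices.

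The main obstacle will be part (c): the $\sqrt\epsilon$ rate is tight and emerges only from the precise interplay between the factorial decay $(\epsilon t)!$ at the truncation point and the inequality $\epsilon\log(1/\epsilon) = O(\sqrt\epsilon)$. A cruder argument, such as a direct Chernoff bound on the number of big gaps, would yield only $\exp(\epsilon^a t)$ for some $a < 1/2$, insufficient for the later second-moment analysis. One must therefore track constants carefully through the change of variables and invoke the specific cancellation above.
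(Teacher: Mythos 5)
Your proof is correct, and it follows a genuinely different route from the paper's. The paper discretizes the model onto $\mathbb{Z}/n$, encodes paths as explorer instructions, performs a purely combinatorial count (a stars-and-bars term $\binom{nt+k}{k}$ for the gap configuration and a geometric factor $(c^*\beta/n)^k$ for the jump probabilities), and then sends $n\to\infty$, carrying $o_n(1)$ error terms throughout. You instead apply the multivariate Mecke/Campbell formula directly to the Poisson edge process, using the triangular change of variables $(u_i,v_i)\mapsto(y_{i-1},z_i)$ to factorize the integral into jump-intensity factors $\int\beta|z|^{-2}\,dz$ and a signed-simplex volume $2^h t^h/h!$ for the gaps; this is the continuum-native version of the same count and eliminates the discretization/limit bookkeeping entirely. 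For part (c), the paper's argument works through the combinatorial quantity $M_{m,\ell,r,\theta}$ and splits the sum over hop counts into three regimes, while your negative-binomial generating-function identity $\sum_j\binom{k+j}{k}x^j=(1-x)^{-(k+1)}$ collapses the sum over $h$ in one stroke, reducing everything to a truncated Poisson tail; this is appreciably cleaner. Both proofs ultimately rest on the same two cancellations: the Stirling bound $h!\geq(h/\mathrm{e})^h$ at the truncation point $k\approx\epsilon t$, and the elementary inequality $\epsilon\log(1/\epsilon)=O(\sqrt\epsilon)$, which you identify explicitly (with the sharp constant $2/\mathrm{e}$) as the source of the $\sqrt\epsilon$ rate. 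One small point to flag in a final write-up: the Mecke formula naturally gives the expected number of ordered $h$-tuples of \emph{distinct} edges, which is exactly what you need since proper paths do not reuse a jump, and you should note that the ``no two consecutive gaps'' condition is automatic in your parametrization by jump sequences (a zero-length gap simply means adjacent jumps). Tracking the $2^h$ sign factor carefully (which you elide) only shifts the constant $C$.
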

\begin{proof}
For $n\in \mathbb{N}$, consider a discretized model on $\mathbb{Z}/n=\{i/n:i\in\mathbb{Z}\}$, where a long edge is placed between $i/n$ and $j/n$ for $i-j\geq n$ with probability $\beta(i-j)^{-2} \wedge 1$ (in the special case that $|i-j|=1$, the edge connecting $i/n$ and $j/n$ presents with probability 1), independently. It is clear that the discretized model converges to our continuous model $d_{(1, \infty)}$, and thus it remains to prove the desired bounds in the discretized model uniformly for all sufficiently large $n$.  In the continuous model no vertex can be the endpoint of two jumps so we may restrict our attention to paths with the same property.

We count paths by encoding them as a set of instructions for an explorer.  For each $x\in \mathbb{N}/n$, we associate a tuple $(w_x, j_x)$ with $w_x, j_x\in \mathbb{Z}/n$ which will serve as instructions on where the explorer shall move. Our exploration process  proceeds as follows. Start the explorer at the origin (set $s_0 = 0$). For $i\geq 0$, reveal the tuple $(w_{s_i}, j_{s_i})$ and have the explorer walk distance $w_{s_i}$  (using the nearest neighbor edges in $\mathbb{Z}/n$) and then jump $j_{s_i}$ distance (presumably using long edges in the discretized model); set $s_{i+1} = s_i + w_{n_i} + j_{n_i}$. We stop the process once $\sum_i |w_i| \geq t$. Clearly, the trace of the explorer forms a candidate path in the long range percolation graph. Thus, $\E |\mathcal{P}_t|$ is bounded by the sum of the probability for a trace to be a legal path in long range percolation over all possible traces (i.e. the probability that the edges in the path are actually present). Write $c^* = 4\sum_{i=1}^\infty i^{-2}$. Denoting by index $k$ the number of hops in the path, we obtain from straightforward combinatorial computation that for a large constant $C = C(\beta)>0$ and uniformly for all sufficiently large $n$,
\begin{align}
\E |\mathcal{P}_t|& \leq \sum_{k=0}^\infty \, \, \sum_{\forall i:\mathrm{sgn}(w_{n_i}), \mathrm{sgn}(j_{s_i})\in \{\pm1\}} \,\,\sum_{\sum_{i=0}^{k-1}|w_{s_i}| \leq t} \,\, \sum_{\forall i: j_{s_i}\geq 1} \prod_{i=0}^k \frac{\beta}{j_{s_i}^2 n^2}\nonumber\\
&\leq 2\sum_{k=0}^\infty  \binom{n t + k}{k} \left(\frac{c^*\beta}{n}\right)^k \label{eq-referee-1}\\
&\leq 2 \sum_{k=0}^{nt} \frac{(2\mathrm{e}nt)^k}{k^k} \left(\frac{c^*\beta}{n}\right)^k + 2\sum_{k> nt} \frac{(2\mathrm{e}k)^{nt}}{(nt)^{nt}} \left(\frac{c^*\beta}{n}\right)^k \leq C^t\,,\nonumber
\end{align}
where the initial summation is over the direction of the walks and jumps and their magnitudes and the probability that the necessary edges are present and the first inequality comes from summing over the choices of jump lengths.
For the final we have used the fact that $\frac{(2\mathrm{e}nt)^k}{k^k} \left(\frac{c^*\beta}{n}\right)^k$ decays geometrically for $k\geq Ct$ and that the second sum is negligible for $n \geq 10^4 c^* \beta$.

Now, note that $\P(\exists P\in \mathcal{P}_t: h(P) \geq \alpha t)\leq \E|\mathcal{P}_t \cap \{P: h(P)\geq \alpha t\}|$. Therefore, an analogous derivation (to that of $\E |\mathcal{P}_t|$) yields that
\begin{align*}\P(\exists P\in \mathcal{P}_t: h(P) \geq \alpha t) &\leq  \sum_{k\geq \alpha t}\, \, \sum_{\forall i:\mathrm{sgn}(w_{n_i}), \mathrm{sgn}(j_{s_i})\in \{\pm1\}} \,\,\sum_{\sum_{i=0}^{k-1}|w_{s_i}| \leq t} \,\, \sum_{\forall i: j_{s_i}\geq 1} \prod_{i=0}^k \frac{\beta}{j_{s_i}^2 n^2}\\
& \leq 2 \sum_{k=\alpha t}^{nt} \frac{(2\mathrm{e}nt)^k}{k^k} \left(\frac{c^*\beta}{n}\right)^k + 2\sum_{k> nt} \frac{(2\mathrm{e}k)^{nt}}{(nt)^{nt}} \left(\frac{c^*\beta}{n}\right)^k \leq (C/\alpha)^t\,,
\end{align*}
for $\alpha \geq C \geq 8 \mathrm{e}^2 c^* \beta$. Finally, we estimate $\E |\mathcal{P}_{t, \epsilon}|$.  We assume that $\epsilon$ is  small, otherwise we  use our trivial upper bound on $|\mathcal{P}_t|$.  We let $M_{m, \ell, r, \theta}$ be the number of ways of partitioning the interval $[0, m]$ into $\ell$ segments with integer lengths such that there are $r$ segments of length at least $\theta$. Then, indexing by $i$ the number of end points formed by the $r$ long segments, assigning these and then choosing the remaining points we get
\begin{align*}
M_{m, \ell, r, \theta} &\leq \sum_{i=r}^{2r\wedge (\ell-1)} \binom{m}{i} \binom{m \wedge [(\ell - r)\theta]}{\ell - i - 1}\\
&\leq \sum_{i=r}^{2r\wedge (\ell-1)} \frac{m^i}{i!} \frac{(m \wedge[(\ell - r)\theta])^{\ell - i - 1}}{(\ell - i - 1)!} \leq \frac{2^\ell }{(\ell-1)!} m^{2r\wedge (\ell-1)} ((\ell - r)\theta)^{(\ell - 2r-1)\vee 0}\,.
\end{align*}
We extend the final gap to make the total length $t$ and thus possibly increasing the number of segments of length at least $\epsilon n$  by 1. Fix a constant $c_1>2$ and decompose the number of hops $k$ into three intervals $[0, c_1 \epsilon t]$, $(c_1\epsilon t, nt]$ and $(nt, \infty)$. Writing $\E |\mathcal P_{t, \epsilon}|$ as a sum over these three intervals and recalling \eqref{eq-referee-1}, we obtain that
\begin{align*}
\E |\mathcal{P}_{t, \epsilon}|&\leq \sum_{k=0}^{c_1 \epsilon t}  \binom{nt + k}{k} \left(\frac{c^*\beta}{n}\right)^k + \sum_{k=c_1 \epsilon t}^{nt} \sum_{j=0}^{\epsilon t +1}    M_{n t, k + 1, j, \epsilon n } \left(\frac{c^*\beta}{n}\right)^k + \sum_{k> nt}  \binom{nt + k}{k} \left(\frac{c^*\beta}{n}\right)^k\\
&\leq  C_1 \left(1+\left(\frac{4 \mathrm{e}c^*\beta t}{c_1 \epsilon t}\right)^{c_1 \epsilon t}\right) + C_2\sum_{k=c_1 \epsilon t}^{nt} \sum_{j=0}^{\epsilon t +1} \frac{2^k }{k!} (nt)^{2j} (\epsilon n (k-j))^{k-2j} \left(\frac{c^*\beta}{n}\right)^k   +  o_n(1) \\
&\leq  C_1\left(1+\mathrm{e}^{c_2 \epsilon \log(1/\epsilon) t}\right) + C_2\sum_{k=c_1 \epsilon t}^{nt} \sum_{j=0}^{\epsilon t +1} \frac{(2c^* \beta \epsilon)^k k^{k}}{k!} \left(\frac{t}{\epsilon k}\right)^{2j}   +  o_n(1) \\
&\leq  C_1\left(1+\mathrm{e}^{c_2 \epsilon \log(1/\epsilon) t}\right) + C_2\sum_{k=c_1 \epsilon t}^{nt}(2 c^* \beta \mathrm{e} \epsilon)^k \mathrm{e}^{c_3 \epsilon \log(1/\epsilon) t}  +  o_n(1) \\
&\leq  C_1\left(1+\mathrm{e}^{c_2 \epsilon \log(1/\epsilon) t}\right) + C_2\mathrm{e}^{c_3 \epsilon \log(1/\epsilon) t}  +  o_n(1) \leq C\mathrm{e}^{C\sqrt{\epsilon}t}\,,
\end{align*}
where $o_n(1)\to_{n\to \infty} 0$  and $c_2, c_3, C_1, C_2, C$ are some large constants depending only on $\beta$.
\end{proof}

\section{A first look at the power law}\label{s:firstmoment}
In this section, we establish the following power law on the random metric generated by the continuous model.
\begin{prop}\label{prop-existence-exponent}
For all $\beta>0$, there exists $0<\theta<1$ such that
$$\E d_{(\delta, 1)} (0, 1) = \delta^{1 - \theta + o_\delta(1)}\,.$$
\end{prop}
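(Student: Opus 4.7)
The plan is to apply subadditive-limit theory to $\log f$, where $f(n)\df\E d_{(1,n)}(0,n)$. By scale invariance of the Poisson intensity $\beta|x-y|^{-2}$ under dilations $(x,y)\mapsto(\lambda x,\lambda y)$, the map $x\mapsto x/\delta$ gives the distributional identity $d_{(\delta,1)}(0,1)\stackrel{d}{=}\delta\cdot d_{(1,1/\delta)}(0,1/\delta)$, so the claim $\E d_{(\delta,1)}(0,1)=\delta^{1-\theta+o_\delta(1)}$ is equivalent to $\log f(n)/\log n\to\theta$ for some $\theta\in(0,1)$. The main work is an approximate sub-multiplicativity $f(mn)\leq C(\beta)\,f(m)\,f(n)$ for all $m,n\geq 1$; Fekete's lemma applied to $k\mapsto\log(Cf(2^k))$ then yields the limit $\theta\in[0,1]$, and polynomial two-sided bounds on $f$ pin $\theta$ strictly between $0$ and $1$.

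To prove the sub-multiplicativity, I would split the edges of length in $(1,mn)$ into \emph{coarse} edges of length in $[n,mn]$ and \emph{fine} edges of length in $(1,n)$, and build a candidate path in two stages. First, consider the ``skeleton'' metric using only coarse edges: by the same scaling, $d_{(n,mn)}(0,mn)\stackrel{d}{=}n\cdot d_{(1,m)}(0,m)$, so the skeleton has expected total gap length $nf(m)$, and Lemma~\ref{lem-exploration} bounds the number of coarse jumps (and hence of skeleton gaps) by $O(f(m))$ except on a small-probability event. Second, within each skeleton gap of length $\ell$, refine using fine edges at expected cost $\E d_{(1,n)}(\text{gap})=f(\ell)$ when $\ell\leq n$, and at most $C\lceil\ell/n\rceil f(n)$ in general. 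Since the average gap length is $nf(m)/\#\text{gaps}\lesssim n$, the sum over gaps telescopes to at most $(\#\text{gaps})\cdot f(n)\lesssim f(m)f(n)$, giving $f(mn)\leq Cf(m)f(n)$.

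For the upper bound $\theta<1$, one needs $f(n)\leq n^{1-c}$: the crude estimate $f(n)\leq n$ is strict because, with probability bounded away from $0$ by a function of $\beta$, the central band $[n/4,3n/4]$ contains a long edge of length at least $n/2$ (its expected count is a positive constant), providing a definite shortcut; combined with the sub-multiplicativity across dyadic scales this forces polynomial decay $f(n)\leq Cn^{1-c'}$. For $\theta>0$ one needs $f(n)\geq n^{c}$, which I would derive from the path-counting estimate $\E|\mathcal P_t|\leq C^t$ of Lemma~\ref{lem-exploration}: the probability that $d_{(1,n)}(0,n)\leq t$ is bounded by the expected number of proper paths from the origin whose endpoint lies at $n$ with total length at most $t$, and tracking the jump-length factors $\beta/|u-v|^2$ in the exploration argument shows this expectation is $o(1)$ when $t=n^c$ with $c$ sufficiently small.

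The principal obstacle is the sub-multiplicativity step, specifically handling skeleton gaps of atypically large length: a direct application of Jensen's inequality to $\sum f(\ell_i)$ is unavailable since concavity of $f$ is only known \emph{a posteriori}, and gaps can in principle exceed $n$. To get around this, one separates the gaps into those with $\ell_i\leq n$ (handled by the uniform bound $\E d_{(1,n)}(a,b)\leq f(b-a)$ together with the gap count from Lemma~\ref{lem-exploration}) and those with $\ell_i>n$ (whose aggregate contribution is suppressed by the refined estimate $\E|\mathcal P_{t,\epsilon}|\leq e^{C\sqrt{\epsilon}t}$ applied to the rescaled skeleton, since a skeleton geodesic with many long gaps is rare); the exceptional contributions are then absorbed into the multiplicative constant $C(\beta)$.
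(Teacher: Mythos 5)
Your overall plan — reduce to $f(n)=\E d_{(1,n)}(0,n)$ by scaling, prove approximate sub\-multiplicativity $f(mn)\le C f(m)f(n)$, deduce the limit $\theta$ by Fekete, and pin $\theta\in(0,1)$ by separate two-sided polynomial bounds — is exactly the structure of Section~\ref{s:firstmoment}. The submultiplicativity construction (take the coarse-scale geodesic, control the number of coarse jumps via Lemma~\ref{lem-exploration}, cut each gap into pieces of length at most $n$ and refill with fine edges) is essentially the proof of Lemma~\ref{lem-submultiplicative}. Your shortcut argument for $\theta<1$ is also the same in spirit as the upper bound in Proposition~\ref{prop-theta-0-1}.

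The serious gap is in your proof of $\theta>0$. You propose to bound $\P(d_{(1,n)}(0,n)\le t)$ with $t=n^c$ by the expected number of proper paths from $0$ to $n$ of gap-length $\le t$, and to argue that the constraint of traversing distance $n$ with $k$ jumps (each carrying a factor $\beta/|u-v|^2$) makes this expectation $o(1)$. That cannot work: the ``reaching $n$'' constraint only buys a polynomial suppression of order $1/n$ (one of the $k$ jumps must have length $\gtrsim n/k$), while the unconstrained count of proper paths of gap-length $\approx t$ grows like $C^t$ as in Lemma~\ref{lem-exploration}. The product is of order $t^2 C^t/n$, which for $t=n^c$ with any $c>0$ tends to infinity; the first-moment method therefore only yields $d_{(1,n)}(0,n)\gtrsim \log n$, not a power of $n$. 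The paper instead proves the lower bound in Proposition~\ref{prop-theta-0-1} by a genuinely multi-scale recursion: \eqref{eq-recursion-lower-theta} shows $\E d_{(\delta^{j+1},1)}(0,1)\ge 2\delta\,\E d_{(\delta^j,1)}(0,1)$, a factor-of-two gain \emph{per scale}, obtained by enumerating coarse paths (Lemma~\ref{lem-exploration}), applying the single-scale short-cut estimate of Lemma~\ref{lem-lower-bound-distance}, and combining the independent fill-ins across gaps with the BK inequality. The $2^j$ accumulated over $j$ scales is what produces the polynomial, and there is no single-scale substitute for it.

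A secondary issue in your submultiplicativity step: bounding the fill cost of a gap of length $\ell\le n$ by $Cf(n)$ requires that $s\mapsto\E d_{(1,n)}(0,s)$ be essentially non-decreasing on $[0,n]$. This is not obvious and in the paper it is Corollary~\ref{cor-first-moment-does-not-decrease}, whose proof is precisely the lower-bound recursion \eqref{eq-recursion-lower-theta}. So even the submultiplicativity as you state it is downstream of the $\theta>0$ machinery, and your proposed shortcut does not break that dependency.
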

In Subsection~\ref{subsec:exponent-0-1} we show that $\E d_{(1, n)}(0, n)$ grows polynomially in $n$ while the power is strictly between 0 and 1, and in Subsection~\ref{subsec:exponent}, we prove the existence of the exponent. To begin with we prove
the following self-similar property of the continuous model that will be applied repeatedly.
\begin{lemma}\label{lem-renormalization}
For all $\beta>0$ and all $x, \delta, \delta'>0$, we have
 \begin{equation}\label{eq-renormalization}
d_{(\delta \delta', \delta)}(0, x) \stackrel{\mathrm{law}}{=}\delta d_{(\delta', 1)}(0, x/\delta)\,.
 \end{equation}
\end{lemma}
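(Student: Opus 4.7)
The plan is to exhibit a measure-preserving bijection between the edge configurations relevant to the two sides of \eqref{eq-renormalization} and then observe that, under this bijection, distances scale by the factor $\delta$.

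Consider the rescaling map $\Phi_\delta : \R \to \R$ defined by $\Phi_\delta(y) = y/\delta$, acting on pairs of endpoints by $(u,v)\mapsto (u/\delta, v/\delta)$. Let $\mathcal{E}$ denote the underlying Poisson point process on $\{(u,v)\in \R^2 : u<v\}$ with intensity $\beta|u-v|^{-2}\,du\,dv$, and let $\Phi_\delta(\mathcal{E})$ denote its image. The first step is the standard change-of-variables computation: under $(u,v)\mapsto(\tilde u,\tilde v) = (u/\delta,v/\delta)$, the Lebesgue measure $du\,dv$ pulls back as $\delta^2 d\tilde u\,d\tilde v$, while $|u-v|^{-2} = \delta^{-2}|\tilde u - \tilde v|^{-2}$; the two factors of $\delta^{\pm 2}$ cancel, so the intensity measure is invariant. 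By the mapping theorem for Poisson processes, $\Phi_\delta(\mathcal{E})$ is again a Poisson point process on $\R^2$ with the same intensity $\beta|u-v|^{-2}du\,dv$.

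The second step is to track how the length constraint and the containment constraint transform. An edge of Euclidean length $|u-v|$ becomes an edge of length $|u-v|/\delta$; thus the event $|u-v|\in(\delta\delta',\delta)$ corresponds under $\Phi_\delta$ to $|\tilde u - \tilde v|\in(\delta',1)$. Similarly, the requirement $u_i,v_i\in[0,x]$ becomes $\tilde u_i,\tilde v_i\in[0,x/\delta]$. Now plug the rescaled endpoints into the definition \eqref{eq-def-cts} of the constrained distance: each gap contribution $|v_i-u_{i+1}|$ (and the boundary terms) satisfies $|v_i-u_{i+1}| = \delta|\tilde v_i - \tilde u_{i+1}|$, so the total length of any admissible path from $0$ to $x$ in $d_{(\delta\delta',\delta)}$ equals $\delta$ times the total length of the corresponding admissible path from $0$ to $x/\delta$ in $d_{(\delta',1)}$. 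Taking the minimum over the same set of admissible configurations (made equivalent by the bijection) yields $d_{(\delta\delta',\delta)}(0,x) = \delta\cdot d_{(\delta',1)}(0,x/\delta)$ almost surely under this coupling, and hence the equality in law \eqref{eq-renormalization}.

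There is no real obstacle here: the proof is essentially a verification that the Poisson intensity $|u-v|^{-2}\,du\,dv$ is scale-invariant in the one-dimensional setting, combined with the elementary observation that the constrained-distance functional is positively homogeneous of degree one under simultaneous rescaling of space and of the edge-length cutoffs. The only small point requiring care is to make sure the truncation interval is mapped onto the correct target interval (open/closed endpoints match up trivially since the set $\{|u-v|\in\{\delta\delta',\delta\}\}$ has measure zero under the Poisson intensity).
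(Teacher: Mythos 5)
Your proof is correct and takes the same approach as the paper's own (terse) proof: couple the two edge configurations via the scaling map $y\mapsto y/\delta$, use the scale invariance of the intensity $\beta|u-v|^{-2}\,du\,dv$, and observe that path lengths then scale by $\delta$. You simply spell out the change-of-variables computation and the bookkeeping for the length and containment constraints that the paper attributes to "the scaling property of Poisson process."
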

\begin{proof}
By the scaling property of Poisson process and the definition of our continuous model, we can couple the long edges in metric $d_{(\delta \delta', \delta)}(\cdot, \cdot)$ and $d_{(\delta', 1)}$ in the manner such that for all $x, y\in \mathbb{R}$ there is an edge connecting $x$ and $y$ in $d_{(\delta \delta', \delta)}(\cdot, \cdot)$ if and only if there is an edge connecting $x/\delta$ and $y/\delta$ in $d_{(\delta', 1)}(\cdot, \cdot)$. Under this coupling, it is obvious that $d_{(\delta \delta', \delta)}(0, x) = \delta d_{(\delta', 1)}(0, x/\delta)$, completing the proof of the lemma.
\end{proof}

\subsection{Preliminary bound on the expected distance}\label{subsec:exponent-0-1}
In this subsection, we show that the expected distance in the LRP model grows neither linearly nor sub-polynomially.
\begin{prop}\label{prop-theta-0-1}
For $\beta>0$, there exists $C, c>0$ and $0<\alpha_1 < \alpha_2<1$ such that for all $0<\delta<1$
$$c \delta^{1-\alpha_2} \leq \E d_{(\delta, 1)} (0, 1) \leq C \delta^{1 - \alpha_1}\,.$$
\end{prop}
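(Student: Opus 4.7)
The plan is to reduce, via Lemma~\ref{lem-renormalization}, to polynomial bounds on $f(n) := \E d_{(1,n)}(0,n)$: I will show that $c n^{\alpha_1} \le f(n) \le C n^{\alpha_2}$ for some $0 < \alpha_1 < \alpha_2 < 1$, which translates back to the stated bounds on $\E d_{(\delta,1)}(0,1)$ via $n = 1/\delta$ (the proposition's lower bound exponent should read $1-\alpha_1$ rather than $1-\alpha_2$).

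For the upper bound I first establish an approximate sub-multiplicativity of the form $f(n_1 n_2) \le K f(n_1) f(n_2)$ (up to additive corrections) via block decomposition: partition $[0, n_1 n_2]$ into $n_1$ consecutive blocks of length $n_2$, observe that traversing any visited block costs of order $f(n_2)$ in expectation by Lemma~\ref{lem-renormalization}, and note that the ``skeleton'' graph on blocks (with $i \sim j$ iff an LRP edge lies between blocks $I_i, I_j$) inherits an LRP-like structure, so its expected hop-distance is of order $f(n_1)$. Second, I exhibit a scale $n_0$ at which $K f(n_0) < n_0$: with positive probability (depending on $\beta$) the interval $[0, n_0]$ contains an edge of length at least $(1-\epsilon) n_0$, which reduces the distance to at most $2\epsilon n_0$ on that event, enough to push the ratio below $1/K$ after tuning $\epsilon$ and $n_0$. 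Iterating sub-multiplicativity from $n_0$ yields $f(n_0^m) \le K^{-1}(Kf(n_0))^m$ and hence $f(n) \le C n^{\alpha_2}$ with $\alpha_2 = \log(Kf(n_0))/\log n_0 \in (0, 1)$.

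For the lower bound I use Lemma~\ref{lem-exploration}. The estimate $\E|\mathcal{P}_t| \le C^t$ controls the expected number of proper paths from $0$ of NN length at most $t$; combined with the endpoint-localization constraint (a path ending at position $n$ must have signed jump-sum equal to $n$ minus the NN length), the expected number of such paths ending within distance $1$ of $n$ acquires a factor of order $1/n^{\gamma}$ for some $\gamma > 0$ coming from the heavy-tailed $\beta/j^2$ jump distribution. Together with the second statement $\P(\exists P \in \mathcal{P}_t : h(P) \ge \alpha t) \le (C/\alpha)^t$ to suppress paths with too many hops, this shows that the expected number of paths from $0$ to $n$ with NN length $\le t$ is less than $1$ for $t = c n^{\alpha_1}$ with some $\alpha_1 > 0$, whence Markov's inequality gives $f(n) \ge c n^{\alpha_1}$.

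The main obstacle is producing a genuinely polynomial lower bound valid for all $\beta > 0$. For $\beta < 1$ the classical Coppersmith--Gamarnik--Sviridenko ``no-crossing-edge'' argument (counting $i$ with no LRP edge between $[0,i]$ and $[i+1, n]$) gives exponent $1 - \beta$ directly, but this breaks down when $\beta \ge 1$ because almost every unit interval is then crossed. The refined path-counting argument for general $\beta$ must carefully balance the exponential factor $C^t$ against the polynomial localization factor, using the hop-count suppression of Lemma~\ref{lem-exploration} to achieve the needed cancellation; executing this balance is the technical heart of Section~\ref{subsec:exponent-0-1}.
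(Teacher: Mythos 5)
Your plan diverges from the paper on both sides of the inequality, and the lower bound as you propose it has a fatal quantitative flaw.

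\textbf{Lower bound.} You propose to count proper paths from $0$ to $n$ with NN-length at most $t$, using $\E|\mathcal P_t|\le C^t$ and an endpoint-localization gain of order $n^{-\gamma}$ from the requirement that the signed jumps sum to $\approx n$. This cannot produce a polynomial lower bound: the path count $C^t$ grows \emph{exponentially} in $t$, while endpoint localization is only polynomial in $n$, so the union bound is informative only for $t\lesssim \log n$, giving at best $f(n)\gtrsim \log n$. The hop-count tail in Lemma~\ref{lem-exploration} does not rescue this — it only trims the hop count to $O(t)$ and does not reduce the $C^t$ entropy. The paper avoids this by never comparing $C^t$ against $n^{-\gamma}$. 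Instead it runs an \emph{induction across scales}: a putative short path $P^*$ at cutoff $\delta^{j+1}$ is parametrized by a skeleton path $P$ at cutoff $\delta^j$ with its gaps filled by geodesics in $(\delta^{j+1},\delta^j)$. Lemma~\ref{lem-lower-bound-distance} plus the BK inequality show that, conditional on the skeleton, the probability that the filled length shrinks by more than a factor $8\delta$ is $\le (C'\epsilon'(\delta))^{t\delta^{-j}}$ — note the exponent is $t\delta^{-j}$, not $t$. Since the skeleton entropy at this scale is $\le C^{t\delta^{-j}}$ by Lemma~\ref{lem-exploration} with rescaling, and $\epsilon'(\delta)\to 0$ as $\delta\to 0$, the union bound closes. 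This yields $\E d_{(\delta^{j+1},1)}(0,1)\ge 2\delta\, \E d_{(\delta^j,1)}(0,1)$ and, iterated, the polynomial lower bound. The extra $\delta^{-j}$ in the exponent is exactly the mechanism your one-shot count is missing.

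\textbf{Upper bound.} Your block sub-multiplicativity $f(n_1n_2)\le K f(n_1)f(n_2)$ is in the same spirit as Lemma~\ref{lem-submultiplicative}, but your base case does not close. With a single edge of length $\ge(1-\epsilon)n_0$ you get $f(n_0)/n_0 \le 1-\P(A)(1-2\epsilon)$, where $\P(A)\asymp\beta\epsilon^2$ is tiny, so this never drops below $1/K$ for a fixed $K>1$. The paper sidesteps the accumulating constant entirely: conditioning on the event $A_x$ (a shortcut edge joining $[x/9,x/3]$ to $[2x/3,8x/9]$), which is independent of the finer scales by the cutoff structure, and using the self-similarity of Lemma~\ref{lem-renormalization}, one derives the \emph{contraction} $d_j\le(1-\alpha/3)d_{j-1}$ for $d_j=\max_{1/3\le x\le 1}\E d_{(9^{-j},1)}(0,x)$. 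There is no base-case threshold to beat: every step multiplies by the fixed factor $1-\alpha/3<1$, giving $\E d_{(9^{-j},1)}(0,1)\le(1-\alpha/3)^j$ immediately. Your idea can perhaps be repaired by making the block decomposition self-improve at every scale (effectively re-deriving the paper's contraction), but as written the base case is a genuine gap.

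Your observation that the proposition's lower bound exponent should read $1-\alpha_1$ rather than $1-\alpha_2$ is correct; the statement as printed does not use $\alpha_1$.
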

\begin{proof}[Proof of Proposition~\ref{prop-theta-0-1}: upper bound.]
For any $1/3\leq x\leq 1$, let $A_x$ be the event that there exists an edge of length in $(x/3, 1)$ joining intervals $[x/9, x/3]$ and  $[2x/3, 8x/9]$. On the event $A_x$, we denote by $\langle x_1, x_2\rangle$ this edge (pick an arbitrary one if there are multiple such edges). Therefore, for all $j\geq 1$ we have
\begin{align*}
\E d_{(9^{-j}, 1)}(0, x) &\leq \E (d_{(9^{-j}, x/3))} (0, x) \mid A_x) \P(A_x) + \E (d_{(9^{-j}, x/3)} (0, x)\mid A_x^c) \P(A_x^c)\\
&= \P(A_x) \E (d_{(9^{-j}, x/3)} (0, x_1) + d_{(9^{-j}, x/3)} (x_2, x)) +(1 - \P(A_x))\E d_{(9^{-j}, x/3)}(0, x)\\
& = \tfrac{x}{3}\P(A_x) \E (d_{(9^{-(j-1)}, 1)} (0, \tfrac{3x_1}{x}) + d_{(9^{-(j-1)}, 1)} (0,  \tfrac{3(x - x_2)}{x})) + \tfrac{x(1 - \P(A_x))}{3}\E d_{(9^{-(j-1)}, 1)}(0, 3)\,,
\end{align*}
where in the first equality we used the fact that $A_x$ is independent of $d_{(9^{-j}, x/3)}(0,x)$ since they involve edges on different length scales and in the last equality we have used Lemma~\ref{lem-renormalization}. Writing $d_j = \max_{1/3 \leq x\leq 1} \E d_{(9^{-j}, 1)}(0, x)$ and $\alpha = \min_{ 1/3\leq x\leq 1} \P(A_x)$ (indeed $\alpha \equiv \P(A_x)$), we deduce from the above inequality that
\begin{equation}\label{eq-theta-less-1}
d_j \leq (1-\alpha/3) d_{j-1}\,,
\end{equation}
where we have used the triangle inequality that $\E d_{(9^{-(j-1)}, 1)}(0, 3) \leq 3 \E d_{(9^{-(j-1)}, 1)}(0, 1)$.
Noting that $\alpha>0$ depends only on $\beta$, we apply \eqref{eq-theta-less-1} recursively and obtain that $\E  d_{(9^{-j}, 1)}(0, x)  \leq (1 - \alpha/3)^j$ for all $j\geq 1$ and $1/3\leq x\leq 1$. Combined with Lemma~\ref{lem-renormalization}, this yields the desired upper bound.
\end{proof}

Recall that $d^*_{(\delta, 1)}(x, y)$ is the distance between $x$ and $y$ using edges of length in $(\delta, 1)$ from the whole line instead of being confined to the interval of $(x, y)$ as in the case for $d_{(\delta, 1)}(x, y)$.
\begin{lemma}\label{lem-lower-bound-distance}
For all $\beta>0$, there exists a constant $C(\beta)>0$ such that for all $x>0$ and $0\leq u\leq 1$
$$\P(  d^*_{(\delta, 1)}(0, x) \leq 20 u \delta x) \leq u (C(\beta) \epsilon(\delta) )^{(x/2)\vee 1}\,,$$
where $\epsilon(\delta) \to 0$ as $\delta \to 0$.
\end{lemma}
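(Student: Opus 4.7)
The plan is to bound $\P(d^*_{(\delta, 1)}(0, x) \le 20 \delta x)$ by the expected number of proper paths from $0$ to $x$ of length at most $20\delta x$, to enumerate such paths via an exploration process in the style of Lemma~\ref{lem-exploration}, and to exploit the upper bound $1$ on jump length through a sharp estimate on the resulting jump-configuration integral. First I would observe that any such path $P$ has jump lengths in $(\delta, 1)$ and gap total $\|P\|_1 \le T := 20\delta x$. Since each jump contributes at most $1$ to the displacement, the number of jumps must satisfy $h(P) \ge x - T \ge x/2$ for $\delta \le 1/40$, and the signed sum of jump lengths must lie within $T$ of $x$; for $h(P)$ close to this lower bound, this forces the typical jump length to be close to the maximum $1$.

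Adapting the exploration argument of Lemma~\ref{lem-exploration}, I would parametrize a proper path with $k$ jumps by signed gap lengths $g_0, \ldots, g_k$ (subject to $\sum_i |g_i| \le T$) and by jump magnitudes $j_1, \ldots, j_k \in (\delta, 1)$ together with signs $\sigma_i \in \{\pm 1\}$. Applying Campbell's formula to the Poisson edge process then yields
\begin{equation*}
\E \,\#\{\text{paths with }k\text{ jumps from } 0 \text{ to } x\} \;\le\; \frac{(8T)^k}{k!} \cdot \beta^k \cdot I_k,
\end{equation*}
with
\begin{equation*}
I_k \;:=\; \sum_{\sigma \in \{\pm 1\}^k}\int_{(\delta, 1)^k} \prod_{i=1}^k j_i^{-2} \cdot \1\!\Big\{ \Big| \textstyle\sum_i \sigma_i\, j_i - x \Big| \le T \Big\}\, dj_1 \cdots dj_k.
\end{equation*}

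The heart of the argument is a sharp bound on $I_k$. My plan is to substitute $\tilde j_i := 1 - j_i \in [0, 1-\delta)$, so that the endpoint constraint forces the sum of $\tilde j_i$ over the dominant-sign jumps to lie in a window of width $O(T) = O(\delta x)$ when $k$ is near its lower bound $x(1 - 20\delta)$, confining $(\tilde j_i)$ to an $\ell^1$-ball of volume at most $(C\delta)^k$. Since $\prod 1/j_i^2 \le \exp(O(\sum_i \tilde j_i)) \le e^{O(\delta x)}$ on this region, this yields $I_k \le e^{O(\delta x)}(C\delta)^k$. For $k$ substantially larger than $x$ the excess jumps must come in oppositely signed backtracking pairs; I would argue that each such pair costs an extra factor of $\delta$ via the integrability of $j^{-2}$ on $(\delta, 1)$ together with the tightened sum constraint, enough to beat the combinatorial growth.

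Combining these ingredients, $\E \#\{\text{paths with }k\text{ jumps}\} \le (C(\beta)\delta^c)^k$ for some $c > 0$, and summing over $k \ge x/2$ yields $\E\, \#\{\text{short paths}\} \le (C(\beta)\delta^c)^{x}$. Markov's inequality then gives the lemma for $x \ge 1$ with $\epsilon(\delta) := C(\beta)\delta^c \to 0$; the case $x \le 1$ (target exponent $1$) follows from a direct calculation bounding the expected number of single-jump shortcuts from $0$ to $x$ by $O(\delta)$. The hard part will be the sharp control of $I_k$ for $k$ substantially larger than $x$, where backtracking configurations must be shown to contribute a genuine factor of $\delta^c$ per pair, requiring careful handling of jump-sign combinatorics and the geometry of the sum constraint.
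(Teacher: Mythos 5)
Your overall strategy—enumerate proper paths via the exploration process, bound the expected number of short paths, and apply Markov—is the same as the paper's, but you diverge in how you estimate the jump-length integral $I_k$, and this is where the proposal has genuine gaps. The paper's trick is to introduce an intermediate length threshold $\epsilon=\epsilon(\delta)=(\log(1/\delta))^{-1/4}$, split on whether $k\le \epsilon^{-1}x$ or $k>\epsilon^{-1}x$, and observe that in the first regime a macroscopic number of jumps must have length $\ge\epsilon$, each costing a factor $\delta/\epsilon\to 0$ (while in the second regime the $1/k!$ from the gap combinatorics already gives $(C\beta\epsilon)^{\epsilon^{-1}x}$). You instead try to get smallness directly from the volume of the constraint set after substituting $\tilde j_i=1-j_i$, which works cleanly only for $k$ at the bottom of its range and $\delta x$ small.

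Two concrete problems. First, the bound $\prod_i j_i^{-2}\le \exp\bigl(O(\sum_i\tilde j_i)\bigr)\le e^{O(\delta x)}$ is false on the region $\{\tilde j_i\ge 0,\ \sum_i\tilde j_i\le O(\delta x)\}$ once $\delta x$ is of order one or larger: that region contains corners where some $\tilde j_i$ is close to $1-\delta$, i.e.\ $j_i$ is close to $\delta$, and there $j_i^{-2}\sim\delta^{-2}$, so the product is not controlled by $e^{O(\sum\tilde j_i)}$. Since the lemma must hold for all $x>0$ (including $x\gg 1/\delta$), this needs an additional decomposition according to how many jumps are short—which is essentially what the paper's $\epsilon$-thresholding does. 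Second, for $k$ between $\sim x$ and a constant multiple of $x$ (before the $1/k!$ takes over), the ``backtracking pairs'' heuristic is both unproved and not quite the right picture: the excess jumps need not come in cancelling $\pm$ pairs; they can all be positive with lengths near $\delta$, and the constraint $|\sum_i\sigma_i j_i-x|\le T$ alone allows $k$ up to order $x/\delta$ with all positive signs. Extracting a genuine $\delta^c$ per excess jump in that regime is a nontrivial large-deviation estimate for $j^{-2}$-distributed summands, which the proposal does not supply. You acknowledge this is ``the hard part,'' but without it the argument is incomplete; the paper's choice of a slowly-vanishing $\epsilon(\delta)$ (logarithmic, not polynomial) is precisely what lets it avoid this difficulty while still meeting the lemma's $\epsilon(\delta)\to 0$ requirement.
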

\begin{proof}
As in the proof of Lemma~\ref{lem-exploration}, we consider the discretized model indexed with $n\in \mathbb{N}$ on $\mathbb{Z}/n$ where an edge is placed between $i/n$ and $j/n$ with probability $\beta (|i-j|)^{-2}$ if $|i-j|/n \in (\delta, 1)$. Clearly, the discretized model converges to $d_{(\delta, 1)}(\cdot, \cdot)$ as $n\to \infty$. Thus, it suffices to establish a corresponding  upper bound (uniform in $n$) on the discretized model.

Denote by $A_k$ the event that there is a path joining $0$ and $x$ in the discretized model of length at most $20u\delta x$ and $k$ hops. Then,
\begin{align}\label{eq-P-A-k}
\P(A_k) \leq   \tbinom{20u \delta x n + k-1}{k} \big(\tfrac{ c^*\beta}{\delta n}\big)^k \cdot 2^k\,,
\end{align}
where the binomial term counts the number of ways to obtain a sequence of gaps which sums to at most $20u \delta x n$, the factor $2^k$ amounts to the choices of directions to walk along for each gap, and the last term counts for the probability that there exist long edges started from specific points. Set $\epsilon = (\log (1/\delta))^{-1/4}$, and first consider the case that $x \leq \epsilon$, in which
a straightforward computation yields that
$$\limsup_{n\to \infty}\P(\exists k\in \mathbb{N}: A_k) \leq C \beta \epsilon u\,,$$
where $C>0$ is an absolute constant. Now, consider the case $x\geq \epsilon$. Using \eqref{eq-P-A-k} again, we obtain
$$\limsup_{n\to \infty}\P(\exists k\geq \epsilon^{-1}x/10: A_k)\leq \sum_{k\geq \epsilon^{-1}x/\10}\frac{(40 u c^*  x \beta)^k}{k!} \leq (C\beta\epsilon u)^{\epsilon^{-1} x/10}\,.$$
For $k\leq \epsilon^{-1} x/10$ and $\delta<1/100$ there are at least $(x/2) \vee 1$ edges with length at least $\epsilon$. Thus, in this case we have
$$\P(A_k)\leq \tbinom{20\delta u x n + k-1}{k} \big(\tfrac{2 c^*\beta}{\delta n}\big)^k (\tfrac{\delta}{\epsilon})^{(x/2)\vee 1}\,.$$
This then implies that
$$\limsup_{n\to \infty}\P(\exists k \leq \epsilon^{-1} x/10: A_k) \leq \sum_{1\leq k\leq \epsilon^{-1} x/10}\frac{(C\beta u x)^{k}}{k!} (\delta/\epsilon)^{(x/2)\vee 1} \leq u(\mathrm{e}^{2C \beta} \delta/\epsilon)^{(x/2) \vee 1}\,.$$
Altogether, this completes the proof of the lemma.
\end{proof}

We are now ready to complete the proof of Proposition~\ref{prop-theta-0-1}.
\begin{proof}[Proof of Proposition~\ref{prop-theta-0-1}: lower bound.]
It suffices to show that for any $\beta > 0$ and $\delta<\delta_0(\beta)$,
\begin{equation}\label{eq-recursion-lower-theta}
\E d_{(\delta^{j+1}, 1)}(0, 1) \geq 2 \delta \E d_{(\delta^{j}, 1)}(0, 1), \mbox{ for all }  j\in \mathbb{N}\,.
\end{equation}
In order to prove \eqref{eq-recursion-lower-theta}, we construct a short path joining $0$ and $1$ using edges of lengths in $(\delta^{j+1}, 1)$ in the following two steps: (1) construct a path $P$ joining $0$ and $1$ using edges of lengths in $(\delta^j, 1)$; (2) for each gap $(L_i, R_i)$ in $P$, find the geodesic between $L_i$ and $R_i$ using edges of lengths in $(\delta^{j+1}, \delta^j)$. Combine the path $P$ with all the geodesics joining the gaps yields a path $P^*$ joining $0$ and $1$ using edges of lengths in $(\delta^{j+1}, 1)$. Note that every path in $d_{(\delta^{j+1}, 1)}(\cdot, \cdot)$ can be constructed in this way. In order to bound the geodesic from below, it suffices to give a union bound on all such paths.  For each $P$, let $K$ be the number of hops. Denote by $G_i$ the length of the gaps (so $\sum_i G_i = \|P\|_{1}$), and denote by $G^*_i$ the length of the geodesic filling the gap $G_i$ using edges  of lengths in $(\delta^{j+1}, \delta^j)$. Conditioning on $G_i$, it is clear that $G^*_i \stackrel{law}{=} \delta^j  d^*_{(\delta, 1)}(0, \delta^{-j} G_i)$. Note that it suffices to consider the case when $P^*$ is a self-avoiding path. That is to say, the geodesics that fill in the gaps will have to use disjoint edges. With further conditioning on the values of $G_i$'s,  the BK inequality \cite{BK:85} allows us to consider the case where the $G_i^*$'s are independent. As for the independent case, we apply Lemma~\ref{lem-lower-bound-distance} and obtain that for a constant $C(\beta)>0$ and $\epsilon(\delta)$ (with $\epsilon(\delta)\to 0$ as $\delta \to 0$)
\begin{equation}\label{eq-BK-referee}
\P(\sum_i G_i^* \leq 16 u\delta t \mid K = k, \sum_i G_i = t) \leq u 2^k (C(\beta) \epsilon(\delta))^{t\delta^{-j}} \mbox{ for all } 0\leq  u\leq 1\,.
\end{equation}
Combined with Lemma~\ref{lem-exploration} and the fact that $P^*$ can be assumed to be self-avoiding without loss of generality, it follows that for all $0\leq u\leq 1$
\begin{equation}\label{eq-P-P*}
\P(\exists P \mbox{ such that } t\leq \|P\|_{1} \leq 2t \mbox{ and } \|P^*\|_{1} \leq 8\delta t) \leq u(C'(\beta) \epsilon'(\delta))^{t\delta^{-j}}\,,\end{equation}
where $C'(\beta)>0$ is another constant that depends only on $\beta$ and $\epsilon'(\delta)\to_{\delta \to 0} 0$. Applying \eqref{eq-P-P*}, we obtain that
\begin{align}
&\P(t\leq d_{(\delta^j, 1)} (0,1) \leq 2t, d_{(\delta^{j+1}, 1)}(0, 1) \leq  8\delta  d_{(\delta^j, 1)}(0, 1)) \nonumber\\
&\leq \sum_{i=0}^\infty  \P(\exists P \mbox{ such that } 2^i t\leq \|P\|_{1} \leq 2^{i+1}t \mbox{ and } \|P^*\|_{1} \leq 8\delta t) \nonumber \\
&\leq \sum_{i=0}^\infty 2^{-i+1} (C'(\beta) \epsilon'(\delta))^{t\delta^{-j}} \leq 4(C'(\beta) \epsilon'(\delta))^{t\delta^{-j}}\,. \label{eq-referee-page-8-1}
\end{align}
Therefore, we conclude that
\begin{align}
\E d_{(\delta^{j+1}, 1)}(0, 1) &\geq \E 8\delta  d_{(\delta^j, 1)}(0, 1) \one\{d_{(\delta^{j+1}, 1)}(0, 1) \geq 8\delta  d_{(\delta^j, 1)}(0, 1)\} \nonumber\\
&= \int_0^\infty 16\delta \P(t\leq d_{(\delta^j, 1)} (0,1) \leq 2t, d_{(\delta^{j+1}, 1)}(0, 1) \geq  8\delta  d_{(\delta^j, 1)}(0, 1)) dt \nonumber \\
&\geq  \int_0^\infty 16\delta (\P(t\leq d_{(\delta^j, 1)} (0,1) \leq 2t) - 4(C'(\beta) \epsilon'(\delta))^{t\delta^{-j}}) dt \nonumber \\
&= 8 \delta \E d_{(\delta^j, 1)}(0, 1) - 64\delta(\log(1/(C'(\beta) \epsilon'(\delta))))^{-1} \delta^j\,. \label{eq-referee-page-8-2}
\end{align}

Observe a trivial lower bound that $\E d_{(\delta^j, 1)} (0, 1) \geq \delta^j/2\beta$, where $\delta^j/2\beta$ is the expected distance one needs to travel away from the origin before seeing any point that is incident to an edge with length in $(\delta^j, 1)$. Therefore, we can choose $\delta_0(\beta)$ such that for all $\delta<\delta_0(\beta)$ the right hand side of the above display is at least $2\delta \E d_{(\delta^j, 1)}(0, 1)$. This completes the verification of \eqref{eq-recursion-lower-theta}, and thus completes the proof of the lower bound.
\end{proof}

The next corollary roughly states that $\E d_{(1, n)} (0, n)$ does not decrease with $n$.
\begin{cor}\label{cor-first-moment-does-not-decrease}
For any $\beta > 0$, there exists a constant $C_\beta > 0$ such that for all $0\leq s\leq n$
 $$\E d_{(1, n)} (0, s) \leq C_\beta \E d_{(1, n)} (0, n)\,.$$
\end{cor}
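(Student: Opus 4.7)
The plan is to deduce the corollary directly from Proposition~\ref{prop-theta-0-1} via the scaling identity of Lemma~\ref{lem-renormalization}. The starting observation is that
$$d_{(1,n)}(0,s) = d_{(1,s)}(0,s) \qquad \text{for all } s \le n.$$
This holds because every admissible long edge in the definition of $d_{(1,n)}(0,s)$ has both endpoints in $[0,s]$, hence length at most $s \le n$, so the upper truncation by $n$ is inactive. Consequently it suffices to prove $\E d_{(1,s)}(0,s) \le C_\beta\, \E d_{(1,n)}(0,n)$ for all $s \le n$.

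For the principal regime $1 \le s \le n$, I would apply Lemma~\ref{lem-renormalization} with $\delta' = 1$ and $\delta = 1/s$ (respectively $\delta = 1/n$) to obtain
$$\E d_{(1,s)}(0,s) = s\, \E d_{(1/s,1)}(0,1) \quad \text{and} \quad \E d_{(1,n)}(0,n) = n\, \E d_{(1/n,1)}(0,1).$$
By Proposition~\ref{prop-theta-0-1} there are constants $c, C > 0$ and $\alpha_2 \in (0,1)$, depending only on $\beta$, such that $c\, \delta^{1-\alpha_2} \le \E d_{(\delta,1)}(0,1) \le C\, \delta^{1-\alpha_2}$ for $0 < \delta < 1$. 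Inserting the upper bound in the numerator and the lower bound in the denominator yields
$$\frac{\E d_{(1,s)}(0,s)}{\E d_{(1,n)}(0,n)} \le \frac{C\, s\, s^{-(1-\alpha_2)}}{c\, n\, n^{-(1-\alpha_2)}} = \frac{C}{c}\Bigl(\frac{s}{n}\Bigr)^{\alpha_2} \le \frac{C}{c},$$
so $C_\beta = C/c$ suffices in this regime.

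The remaining boundary cases are routine. When $n \le 1$, no long edges are admissible, both quantities equal the corresponding interval lengths $s$ and $n$, and the ratio is at most $1$. When $s < 1 \le n$, the trivial bound $d_{(1,n)}(0,s) \le s \le 1$ combined with $\E d_{(1,n)}(0,n) \ge c\, n^{\alpha_2} \ge c$ from the proposition again bounds the ratio uniformly. Enlarging the constant by an absolute factor absorbs these regimes.

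I expect no substantive obstacle: the argument is essentially an application of scaling together with the power-law estimate. The only point requiring care is that the upper and lower bounds supplied by Proposition~\ref{prop-theta-0-1} share the same exponent $\alpha_2$, as the proposition is stated; without matching exponents the ratio computation above would accrue an extra factor that is polynomial in $n$, and a more delicate comparison of $d_{(1,s)}(0,s)$ with the geodesic from $0$ to $n$ would be needed.
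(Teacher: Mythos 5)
The argument breaks down on the very point you flag at the end, and the flag should be treated as a real objection rather than a caveat. The displayed inequality in Proposition~\ref{prop-theta-0-1} is a typographical error: it declares two exponents $0<\alpha_1<\alpha_2<1$ but uses $\alpha_2$ on both sides, which leaves $\alpha_1$ undefined in the statement, and the proof patently yields two \emph{different} rates. The upper bound comes from the renormalization step $d_j\leq(1-\alpha/3)d_{j-1}$, so $\E d_{(\delta,1)}(0,1)\leq C\delta^{1-\alpha_2}$ with $1-\alpha_2=\log(1/(1-\alpha/3))/\log 9$; the lower bound comes from \eqref{eq-recursion-lower-theta}, $\E d_{(\delta^{j+1},1)}(0,1)\geq 2\delta\,\E d_{(\delta^j,1)}(0,1)$, which only gives $\E d_{(\delta,1)}(0,1)\geq c\,\delta^{1-\alpha_1}$ with $\alpha_1=\log 2/\log(1/\delta_0)$, a much smaller exponent. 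There is also a structural reason the bounds cannot match at this stage: if Proposition~\ref{prop-theta-0-1} already produced a single power law, then Lemma~\ref{lem-power-law} would be an immediate corollary and the entire super-multiplicativity machinery of Section~\ref{s:SecondMoment} would be unnecessary. With the corrected exponents your ratio computation gives
\[
\frac{\E d_{(1,s)}(0,s)}{\E d_{(1,n)}(0,n)}\leq\frac{C}{c}\,\frac{s^{\alpha_2}}{n^{\alpha_1}}\,,
\]
which for $s$ comparable to $n$ is of order $n^{\alpha_2-\alpha_1}$ and hence unbounded, so the argument fails precisely in the regime $s\asymp n$.

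The paper avoids the matching-exponent issue by working directly with the recursion~\eqref{eq-recursion-lower-theta} rather than the weaker two-sided power law. Rescaling that recursion shows that $\E d_{(1,s)}(0,s)\leq\tfrac12\E d_{(1,n)}(0,n)$ whenever $s\leq n/C'_\beta$, which settles the small-$s$ regime at once. For $n/C'_\beta\leq s\leq n$ one cannot get a gain from the recursion alone, and the paper instead inflates the interval by $\kappa=n\lceil C'_\beta\rceil/s$ and uses monotonicity in the edge-length cutoff together with the triangle inequality $\E d_{(1,n)}(0,\lceil C'_\beta\rceil n)\leq\lceil C'_\beta\rceil\,\E d_{(1,n)}(0,n)$. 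That combination is the ``more delicate comparison'' you anticipated would be needed; it cannot be replaced by the ratio of the preliminary power-law bounds.

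Your reduction $d_{(1,n)}(0,s)=d_{(1,s)}(0,s)$ for $s\leq n$ is correct and is implicitly used in the paper's proof as well, and the boundary cases you mention are handled correctly, so the issue is localized to the ratio estimate.
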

\begin{proof}
By \eqref{eq-recursion-lower-theta}, there exists a constant $C'_\beta > 0$ such that the desired inequality holds for all $s \leq n/C'_\beta$ with $C_\beta = 1/2$. Now consider $s\geq n/C'_\beta$. Let $\kappa = n\lfloor C'_\beta\rfloor /s$. Then using Lemma~\ref{lem-renormalization} and \eqref{eq-recursion-lower-theta} again, we get that
$$\E d_{(1, n)}(0, s) \leq \frac{1}{2} \E d_{(1, n\kappa)} (0, s\kappa) = \frac{1}{2} \E d_{(1, n\kappa)}(0, n\lfloor C'_\beta\rfloor) \leq \frac{1}{2} \E d_{(1, n)}(0, n\lfloor C'_\beta\rfloor) \leq \frac{\lfloor C'_\beta\rfloor}{2} \E d_{(1, n)} (0, n) \,.$$
Altogether, this completes the proof of the corollary.
\end{proof}

\subsection{Existence of the exponent}\label{subsec:exponent}
In this subsection we prove the existence of the exponent using a submutiplicative argument.

\begin{lemma}\label{lem-submultiplicative}
For any $\beta > 0$, there exists a constant $C_\beta > 0$ such that for all $0<\delta, \delta'\leq 1$
 $$\E d_{(\delta\delta', 1)} (0, 1) \leq C_\beta \E d_{(\delta, 1)} (0, 1)\E d_{(\delta', 1)} (0, 1)\,.$$
\end{lemma}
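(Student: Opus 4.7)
The plan is a two-scale construction. I take $P$ to be the geodesic of $d_{(\delta, 1)}(0, 1)$, which decomposes into $k$ hops (edges of length in $(\delta, 1)$) and gaps $(L_i, R_i)$ of lengths $G_i = R_i - L_i$, with $L := \sum_i G_i = d_{(\delta, 1)}(0, 1)$. For each gap I use the independent Poisson process of $(\delta\delta', \delta)$-edges to build the constrained $(\delta\delta', \delta)$-geodesic from $L_i$ to $R_i$. Concatenating these with the hops of $P$ produces a path from $0$ to $1$ using only edges of length in $(\delta\delta', 1)$, hence an upper bound on $d_{(\delta\delta', 1)}(0, 1)$. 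Since the $(\delta, 1)$-edges and the $(\delta\delta', \delta)$-edges are independent, conditioning on $P$ and taking expectation over the fine edges yields, by Lemma~\ref{lem-renormalization},
\[
\E\Big[\sum_i d_{(\delta\delta', \delta)}(L_i, R_i)\,\Big|\,P\Big] = \sum_i \delta\, \E\, d_{(\delta', 1)}(0, G_i/\delta).
\]

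I then upgrade Corollary~\ref{cor-first-moment-does-not-decrease} (valid for $s \in [0, 1]$) by combining it with the triangle inequality across unit-length sub-intervals (for $s > 1$) to obtain the uniform bound $\E d_{(\delta', 1)}(0, s) \le C_\beta (1+s)\, \E d_{(\delta', 1)}(0, 1)$ for all $s \ge 0$. Substituting and summing over the gaps,
\[
\E\Big[\sum_i d_{(\delta\delta', \delta)}(L_i, R_i)\,\Big|\,P\Big] \le C_\beta (L + \delta k)\, \E d_{(\delta', 1)}(0, 1),
\]
and taking the outer expectation yields
\[
\E d_{(\delta\delta', 1)}(0, 1) \le C_\beta \bigl(\E d_{(\delta, 1)}(0, 1) + \delta\, \E k\bigr)\, \E d_{(\delta', 1)}(0, 1).
\]
The lemma then follows once we establish the hop-count estimate $\delta\, \E k \le C\, \E d_{(\delta, 1)}(0, 1)$.

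For that last step I appeal once more to the rescaling identity in Lemma~\ref{lem-renormalization}: the $d_{(\delta, 1)}$-geodesic corresponds to the $d_{(1, 1/\delta)}$-geodesic between $0$ and $1/\delta$ with the same hop count and length $L/\delta$. Since $\mathcal{P}_t$ for $d_{(1, 1/\delta)}$ is a sub-family of $\mathcal{P}_t$ for $d_{(1, \infty)}$, the hop tail bound $\P(\exists P \in \mathcal{P}_t : h(P) \ge \alpha t) \le (C/\alpha)^t$ from Lemma~\ref{lem-exploration} still applies, and a dyadic decomposition in $L/\delta$ shows that in the typical regime $\delta\, \E k$ is of order $\E L$. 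The main technical obstacle is the rare event that $L$ is atypically small, where the exploration bound degenerates: here the deterministic bound $k \le N_\delta$ on the Poisson-type count of all $(\delta, 1)$-edges within $[0, 1]$ (of mean $O(\beta/\delta)$), applied via Cauchy--Schwarz and combined with Lemma~\ref{lem-lower-bound-distance} to control the probability that $L$ is unusually small, handles the remaining contribution. Balancing these two tail estimates is the heart of the argument; the geometric two-scale construction and the per-gap expectation estimate are essentially routine.
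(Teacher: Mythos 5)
Your two-scale construction and the reduction to the inequality $\delta\,\E k \le C_\beta\,\E d_{(\delta,1)}(0,1)$ match the paper's approach. Your unified moment bound $\E d_{(\delta',1)}(0,s)\le C_\beta(1+s)\E d_{(\delta',1)}(0,1)$ (Corollary~\ref{cor-first-moment-does-not-decrease} for $s\le 1$ plus triangle inequality over unit intervals for $s>1$) is a valid, mildly different packaging of what the paper does — the paper instead subdivides each gap into pieces of length at most $\delta$, so that only the $s\le 1$ case of the corollary is ever invoked; the two routes give the same bound $\E\|P^*\|_1 \le C_\beta(\E D + \delta\,\E K)\,\E d_{(\delta',1)}(0,1)$.

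The gap is in your fallback for the hop-count estimate when the exploration bound is not effective. You propose $\delta\,\E[k;\,D\text{ small}]\le \delta\sqrt{\E N_\delta^2}\sqrt{\P(D\text{ small})}=O(\sqrt{\P(D\text{ small})})$ and then appeal to Lemma~\ref{lem-lower-bound-distance}; but that lemma only asserts $\P(d^*_{(\delta,1)}(0,1)\le 20\delta)\le C(\beta)\,\epsilon(\delta)$ with $\epsilon(\delta)\to 0$, no rate stated, and the proof actually uses $\epsilon(\delta)=(\log(1/\delta))^{-1/4}$. Meanwhile you need the Cauchy--Schwarz term to be $\lesssim\E D=\delta^{1-\theta+o(1)}$, which would require $\P(D\text{ small})\lesssim\delta^{2(1-\theta)}$ — a polynomial rate the lemma does not provide (and even with the sharper $\delta^{1+o(1)}$ rate implicit in the lemma's proof, Cauchy--Schwarz only closes when $\theta\ge 1/2$, which is not known a priori for all $\beta$). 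The paper avoids this entirely: it bounds $\E K\le \sum_k\P(K\ge k, D\le\delta Ck)+\E D/(\delta C)+1\le\sum_k e^{-ck}+\E D/(\delta C)+1$, i.e.\ the ``rare'' contribution to $\E K$ is just an $O(1)$ additive term coming from Lemma~\ref{lem-exploration}, which is then absorbed because $\E D\ge c^\star\delta$ deterministically. No reference to Lemma~\ref{lem-lower-bound-distance}, no quantitative small-$D$ tail, and no square-root loss. You should replace the Cauchy--Schwarz detour with this direct use of the exploration lemma.
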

\begin{proof}
We need to construct a path $P$ joining $0$ and $1$. To this end, take the geodesic that achieves $d_{(\delta, 1)}(0, 1)$, and denote by $D = d_{(\delta, 1)}(0, 1)$ and by $K$ the number of hops in the geodesic. Then we can divide the gaps into $N\leq D /\delta + K$ segments whose lengths are at most $\delta$. We fill in each such segment with geodesics using edges of lengths in $(\delta \delta', \delta)$, thereby obtaining a path $P$ joining $0$ and $1$.
Applying Lemma~\ref{lem-exploration} (with suitable normalization), we obtain that
\begin{align*}
\E K & = \sum_{k=1}^\infty \P(K\geq k)\leq \sum_{k=1}^\infty \Big(\P(K\geq k,\frac{D}{\delta C} \leq k) + \P(K\geq k,\frac{D}{\delta C} > k) \Big)\\
&\leq  \sum_{k=1}^\infty \mathrm{e}^{-ck} + \frac{\E D}{\delta C}+1  \leq C_1 \frac{\E D}{\delta}\,,
\end{align*}
where $C$ and $c$ are constants in Lemma~\ref{lem-exploration} and $C_1$ is a constant depending on $C$ and $c$ (here we have used the fact that $\E D \geq c^\star\delta$ for a constant $c^\star > 0$).
Therefore, by Lemma~\ref{lem-renormalization} and Corollary~\ref{cor-first-moment-does-not-decrease}, we have
$$\E \|P\|_1 \leq C_\beta \delta \E N \E d_{(\delta', 1)}(0, 1) \leq C_\beta (C_1+1) \E d_{(\delta, 1)}(0, 1) \E d_{(\delta', 1)}(0, 1)\,,$$
for a constant $C_\beta>0$ as required.
\end{proof}

\begin{proof}[Proof of Proposition~\ref{prop-existence-exponent}] Define $\theta(\delta) = \log (\E d_{(\delta, 1)} (0, 1))/ \log \delta$. By Lemma~\ref{lem-submultiplicative}, we obtain that
$$\theta(\delta \delta') \leq \frac{\log \delta}{\log \delta + \log \delta'} \theta(\delta) + \frac{\log \delta'}{\log \delta + \log \delta'} \theta(\delta') +o(1) \,,$$
where $o(1)$ tends to 0 as $\delta\delta'\to 0$. This demonstrates  the subadditivity of $\theta(\delta)$ as $\delta \to 0$, thereby establishing $\theta(\delta)\to_{\delta \to 0} \theta$. By Proposition~\ref{prop-theta-0-1}, we see that $0<\theta<1$.
\end{proof}

\section{A second look at the power law}\label{s:SecondMoment}
In this section, we strengthen the power law obtained in the previous section by sharpening the estimate up to a multiplicative constant. We consider a fixed $\beta>0$ and write $\theta = \theta(\beta)$ as in Proposition~\ref{prop-existence-exponent}. For any $\gamma>0$ and $\theta' < \theta$, define $M^*$ to be $(\gamma, \theta')$-good if
\begin{equation}\label{eq-def-theta-good}
\E d_{(1, M)}(0, M) \leq \gamma (M/M^*)^{\theta'} \E d_{(1, M^*)}(0, M^*) \mbox{ for all } 1\leq M\leq M^*\,.
\end{equation}
\subsection{A bound on the second moment at good points}
In this subsection, we prove the next lemma which states that the second moment of $d_{(1, M^*)}(0, M^*)$ is controlled by the first if $M^*$ is good.
\begin{lemma}\label{lem-second-moment-good}
For all $\beta>0$, there exist constants $0<\theta'<\theta$ and $\gamma, C >0$ such that for all $M^*$ that is $(\gamma, \theta')$-good,
$$\E (d_{(1, M^*)}(0, M^*))^2 \leq C (\E d_{(1, M^*)}(0, M^*))^2\,.$$
\end{lemma}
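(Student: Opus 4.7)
Let $E := \E d_{(1,M^*)}(0,M^*)$ and $D := d_{(1,M^*)}(0,M^*)$. The goal is to construct a random path $P^\ast$ from $0$ to $M^*$ with $D \leq \|P^\ast\|_1$ and $\E \|P^\ast\|_1^2 \leq C E^2$. Fix a large constant $K = K(\beta,\theta',\gamma)$ and set $M := M^*/K$. Partition $[0,M^*]$ into equal subintervals $I_1,\ldots,I_K$ of length $M$, and let $D_j := d_{(1,M)}(\cdot,\cdot)$ across $I_j$. The $D_j$'s are i.i.d., and by the $(\gamma,\theta')$-goodness of $M^*$, $\mu_1 := \E D_j \leq \gamma K^{-\theta'} E$. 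Crucially, each $D_j$ depends only on edges of length $\leq M$ lying inside $I_j$, so $(D_j)_j$ is independent of the long edges (length $>M$) used below.

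\paragraph{Bypass edges and trimming the worst subintervals.}
For each $j$, let $B_j$ be the event that the Poisson process contains an edge of length in $(2M,4M)$ with one endpoint in $I_{j-2}$ and the other in $I_{j+2}$; such an edge lets the path ``bypass'' $I_j$ at $O(M)$ walking cost. A direct Poisson calculation gives $\P(B_j) \geq p = p(\beta) > 0$; the $B_j$'s are $O(1)$-dependent among themselves and independent of $(D_j)_j$. After thinning to every tenth index, $(B_{10j})_j$ is i.i.d. Set $\ell := \lfloor pK/100\rfloor$. The event
\[
G := \bigl\{\text{the $\ell$ indices giving the largest }D_{10j}\text{ all have }B_{10j}\text{ holding}\bigr\}
\]
has probability $\geq 1 - e^{-cK}$ by the independence of $(D_j)_j$ and $(B_j)_j$, a standard binomial tail bound, and a pigeonhole matching argument. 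On $G$, define $P^\ast$ to traverse the $K-\ell$ smaller subintervals directly and to skip the $\ell$ largest via the corresponding bypass edges. Writing $D_{(1)} \leq \cdots \leq D_{(K)}$ for the order statistics of $(D_j)_j$,
\[
D \;\leq\; \|P^\ast\|_1 \;\leq\; \sum_{j=1}^{K-\ell} D_{(j)} + O(\ell M).
\]

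\paragraph{Moment bound; main obstacle.}
A trimmed-sum moment inequality for i.i.d.\ non-negative random variables --- exploiting that every retained summand is at most $D_{(K-\ell)}$, which by a Markov bound is $O(\mu_1)$ with probability $1 - O(K^{-100})$ --- yields $\E \bigl(\sum_{j \leq K - \ell} D_{(j)}\bigr)^2 \leq C_{K,\ell}(K\mu_1)^2 \leq C_{K,\ell}\gamma^2 K^{2-2\theta'} E^2$. Choosing $K$ a sufficiently large constant (depending only on $\beta,\theta',\gamma$) makes the prefactor $C_{K,\ell}\gamma^2 K^{2-2\theta'}$ a constant and yields $\E D^2 \one_G \leq C E^2$. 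For the complementary event, use $D \leq \sum_j D_j$ and the independence of $G^c$ (determined by long edges) from $(D_j)_j$:
\[
\E D^2 \one_{G^c} \;\leq\; \P(G^c)\,\E\Bigl(\sum_j D_j\Bigr)^2 \;\leq\; e^{-cK}\bigl[K^2\mu_1^2 + K\,\E D_j^2\bigr],
\]
and the crude bound $\E D_j^2 \leq M\mu_1$ together with the lower bound $E \geq c_\beta M^{*\alpha_2}$ from Proposition~\ref{prop-theta-0-1} absorbs this into $C E^2$ for $K$ large. \emph{The main obstacle} is the trimmed-sum moment inequality itself: absent a good a priori bound on $\E D_j^2$, the inequality is proved by truncating the $D_j$'s at the $(1-\ell/K)$-quantile and arguing separately on the truncation event; balancing $K$, $\ell$, and the truncation level so that all constants remain under control is delicate and is where the strict inequality $\theta' < \theta$ enters.
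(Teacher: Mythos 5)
Your construction has the right ingredients (partition into subintervals, bypass edges, trim the worst), but it is deployed at a single scale, and this cannot close. Two concrete gaps. First, the event $G$ (``all $\ell$ indices with the largest $D_{10j}$ are bypassable'') does \emph{not} satisfy $\P(G)\geq 1-e^{-cK}$: since $(B_{10j})_j$ is i.i.d.\ and independent of $(D_{10j})_j$, conditioning on the $D$'s shows $\P(G)=p^\ell$, which goes to $0$ as $K\to\infty$, not to $1$. Second, and more fundamentally, the $G^c$ term cannot be absorbed. On $G^c$ you fall back on $D\leq\sum_j D_j$, and you bound $\E D_j^2\leq M\mu_1$ by the trivial cap $D_j\leq M$. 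But $\P(G^c)$ is bounded away from $0$ for fixed $K$, and
\[
K\,\E D_j^2\ \leq\ KM\mu_1\ =\ M^*\mu_1\ \leq\ \gamma K^{-\theta'}M^* E\,,
\]
which is larger than $E^2$ by a factor of order $M^*/E\approx (M^*)^{1-\theta}\to\infty$. So no choice of the constant $K$ can make $\E D^2\one_{G^c}\leq C E^2$ uniformly in $M^*$. The root difficulty is the one you flag but under-estimate: without an a priori second moment bound on the subinterval distances $D_j$, neither the retained order statistics nor the complementary event can be controlled by first-moment (goodness) information alone in a one-shot decomposition.

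The paper resolves this by a genuinely \emph{multi-scale} recursion rather than a one-shot partition. Writing $M^*=m^n$, it relates $\Gamma_{j+1}:=\max_{s\leq m^{j+1}}\E(d_{(1,m^{j+1})}(0,s))^2$ to $\Gamma_j$: it bypasses only the single worst subinterval (the argmax), uses the ``second-max'' trick $\E(\max_{i\neq\tau}X_i)^2\leq (k')^2(\max_i\E X_i)^2$ (a double tail integral plus Markov), and shows that the expected number of non-bypassable subintervals is $\eta=O_\beta(m^{1-\beta}\vee\log m)$, giving
\[
\Gamma_{j+1}\ \leq\ C_\beta\big(\eta\,\Gamma_j + m^4(\E d_{(1,m^j)}(0,m^j))^2\big)\,.
\]
The uncontrolled part (the worst non-bypassable subinterval) re-enters as $\eta\Gamma_j$ --- i.e.\ it is fed back into the recursion rather than bounded crudely --- and the goodness hypothesis together with the choice $\eta m^{-2\theta'}\leq 1/2$ makes the geometric series converge. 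That feedback loop is exactly what your single-scale trimming lacks; you would need to track the second moment across all scales rather than bound it in one step.
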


\begin{proof}
Throughout the proof, we denote by $C_1, C_2, c_1, c_2, \ldots$ positive constants which depend only on $\beta$. 
Write $M^* = m^n$ for a suitable $m\geq 10$ to be selected. For $j= 0, \ldots, n-1$, we wish to relate the moments for the LRP distances using edges of lengths in $(1, m^j)$ to that using edges of lengths in $(1, m^{j+1})$. To this end, consider $8 m^j\leq \Upsilon \leq m^{j+1}$ and we construct the following short path as an upper bound on $d_{(1, m^{j+1})} (0, \Upsilon)$: (1) Let $0\leq x_1\leq x_2 \leq \ldots \leq x_K \leq \Upsilon$ be the sequence of points which are incident to an edge of length in $(m^{j}, m^{j+1})$. (2) Interpolate into the sequence of $(x_i)$ a minimal number of points to obtain a sequence of $0\leq y_1 \leq y_2 \leq \ldots \leq y_{K'} \leq \Upsilon$ such that $|y_{i+1} - y_i| \leq m^j$ for all $i<K'$. (3) Let $A_y = \{i<K': \mbox{ there is an edge of length in } (m^j, m^{j+1}) \mbox{ jumping over } (y_i, y_{i+1})\}$ (by ``jumping over'' $(y_i, y_{i+1})$ we mean that there is an edge connecting some $0\leq z \leq y_i$ and $y_{i+1} \leq z' \leq \Upsilon$) and let $\tau = \arg\max d_{(1, m^j)}(y_i, y_{i+1})$ (in case of multiple maximizers, pick an arbitrary one). Denote by $\langle y_{i_L}, y_{i_R}\rangle$ the long edge that jumps over $(y_{\tau}, y_{\tau+1})$). If $\tau\in A_y$, include edge  $\langle y_{i_L}, y_{i_R}\rangle$ in the path and complete the path by filling the gaps in $\{(y_i, y_{i+1}): i< i_L \mbox{ or } i\geq i_R\}$ using the geodesic in $d_{(1, m^j)}(y_i, y_{i+1})$. Otherwise use the geodesics in $d_{(1, m^j)}(y_i, y_{i+1})$ to fill in all the gaps and obtain a path. Due to the construction, it is clear that
\begin{equation}\label{eq-crude-upper-second}
d_{(1, m^{j+1})} (1, \Upsilon) \leq \max_{i\notin A_y} d_{(1, m^{j})}(y_{i}, y_{i+1})  + \sum_{i\neq \tau} d_{(1, m^j)}(y_i, y_{i+1})\,.
\end{equation}
It is crucial to estimate $\E |[K'] \setminus A_y|$. To this end, we first consider  $$A_x = \{i<K: \mbox{ there is an edge of length in } (m^j, m^{j+1}) \mbox{ jumping over } (x_i, x_{i+1})\}\,.$$ Write $\mathcal{X} = \{x_i: i\in [K]\}$. For $0\leq s\leq t\leq \Upsilon$, denote by $E_{s, t}$ the event no edge of length in $(m^j, m^{j+1})$ joins $(0, s)$ and $(t, \Upsilon)$.  Furthermore, denote by $\mathcal X^{(2)}$ the collection of pairs $( z_1, z_2) \in \mathcal X^2$ such that there exists no edge of length in $(m^j, m^{j+1})$ connecting either $z_1$ or $z_2$ to some point $z$ with $z_1\leq z\leq z_2$. Define $f(s, t)$ such that for all Borel subsets $S$ and $T$,
$$\int_{s\in S, t\in T} f(s, t) dsdt = \E |\mathcal X^{(2)} \cap S\times T|\,.$$
For a justification of the existence of the density $f(s, t)$ in the preceding inequality,  we see that $\E|\mathcal X^2 \cap \cdot|$ is a measure on $\R^2$ which is dominated by the measure $\E |\mathcal X \cap \cdot| \otimes \E |\mathcal X \cap \cdot|$. Since the latter is absolutely continuous with respect to Lebesgue measure with density bounded by $(\frac{2\beta}{m^j})^2$, the density for the former exists and $f(s, t) \leq (\tfrac{2\beta}{m^j})^2$. Thus, we obtain
\begin{align}\label{eq-K-A-x}
\E |[K] \setminus A_x| = \int_{0\leq s\leq t\leq \Upsilon}  f(s, t) \P(\mathcal{X} \cap (s, t) = \emptyset) \P(E_{s, t}) dt ds\,.
\end{align}
In order to verify the preceding equality, we see that 1) conditioning on $\langle s, t\rangle \in \mathcal X^{(2)}$ will not change the probability for the event $\mathcal X \cap (s, t) = \emptyset$, since with probability 1 there exists no edge connecting either $s$ or $t$ to some $z\in (s, t)$; 2) the event $E_{s, t}$ is independent of the aforementioned two events.  Now, write $\mathcal X_{\leq}$ as the collection of points $0\leq z\leq \Upsilon$ for which there exists $0\leq z' \leq z - m^j$ such that the edge connecting $z'$ and $z$ occurs, and respectively write $\mathcal X_{\geq}$ as the collection of points $0\leq z\leq \Upsilon$ for which there exists $z+m^j\leq z' \leq \Upsilon$ such that the edge connecting $z'$ and $z$ occurs. Clearly we have $\mathcal X_{\leq}\cup \mathcal {X}_{\geq} \subseteq \mathcal X$. The advantage of considering $\mathcal X_\leq$ and $\mathcal X_\geq$ is that they are both Poisson point process while $\mathcal X$ is not due to parity issues.  When $\frac{s+t}{2} \leq\Upsilon$ and $s\geq m^j$, we have
\begin{align*}
\P(\mathcal X \cap (s, t) = \emptyset)& \leq \P(\mathcal X_{\geq} \cap (s, t) = \emptyset) = \mathrm{e}^{-\E |\mathcal X_{\geq} \cap (s, t)|}\\
&\leq \mathrm{e}^{-\frac{t-s}{2}\int_{m^j}^{\Upsilon/2} \frac{\beta}{z^2} dz} \leq \mathrm{e}^{-\frac{\beta(t-s)}{4 m^j}}\,.
\end{align*}
A similar estimate holds for the case of $\frac{t+s}{2} \geq \Upsilon$ and $t\leq \Upsilon - m^j$ using $\mathcal X_{\leq}$. In addition, we have
\begin{align*}
\P(E_{s, t}) = \mathrm{e}^{-\int_0^s\int_{t \vee (z+ m^j)}^{\Upsilon} \tfrac{\beta}{(w-z)^2} dwdz} =
\begin{cases} \Big(\frac{\Upsilon (t-s)} {(\Upsilon -s)t}\Big)^\beta, & \mbox{ if } t-s\geq m^j\,,\\
\Big(\frac{\Upsilon m^j} {(\Upsilon -s)t}\Big)^\beta \mathrm{e}^{-\beta \frac{s+m^j-t}{m^j}}, & \mbox{ if } |t-s| < m^j\,.
\end{cases}
\end{align*}
Therefore, we always have
$$\P(E_{s, t}) \leq \Big(\frac{\Upsilon ((t-s)\vee m^j)} {(\Upsilon -s)t}\Big)^\beta\,.$$  
Plugging the above estimates into \eqref{eq-K-A-x}, we obtain that
\begin{align*}
\E |[K] \setminus A_x|
& \leq \int_{0}^{\Upsilon}\int_{s}^{\Upsilon} (\tfrac{2\beta}{m^j})^2 \mathrm{e}^{- \beta \frac{t-s}{4 m^j}}\big((\tfrac{\Upsilon ((t-s)\vee m^j)} {(\Upsilon -s)t}\big)^\beta dtds  \leq  C_1 (m^{1-\beta} \vee \log m )\,.
\end{align*}
Here the last inequality follows from a straightforward computation, and one should note that the main contribution to the integration comes from the region when $t-s$ is about the order of $m^j$ (due to the exponential term $\mathrm{e}^{-\beta \frac{t-s}{4m^j}}$).
We now bound the number of interpolations added to obtain the sequence $\{y_i\}$. To this end, define
$$B = \{x \in (0, \Upsilon): \mbox{ there is no edge in } (m^{j}, m^{j+1}) \mbox{ jumping over } x\}\,.$$ Denoting by $\mathcal{L}(S)$ the Lesbegue measure of $S$ for $S\subseteq \mathbb{R}$, we have that
\begin{align}
\E \mathcal{L}(B) &= \int_0^{\Upsilon} \P(x\in B) dx \leq \int_0^{\Upsilon} \mathrm{e}^{-\int_{m^j}^{x \vee (\Upsilon-x)} \beta s^{-2} \min\{s, x, \Upsilon - x\}ds} dx \nonumber\\
&\leq  2 m^j + 2 \int_{m^j}^{\Upsilon/2} \mathrm{e}^{-\int_{m^j}^{x} \beta s^{-1}  ds } dx  \leq C_2 (m^{1-\beta} \vee \log m ) m^j\,.
\end{align}
When $\beta<1$ the reverse of this inequality also holds. In particular, denoting by
$$B' = \{x \in (0, m): \mbox{ there is no edge in } (1,m) \mbox{ jumping over } x\}\,,$$
we have $\E \mathcal{L}(B') \geq c_1 m^{1-\beta}.$  It follows that
\begin{equation}\label{eq-referee-beta-less-1}
\E d_{(1,m)}(0,n) \geq c_1 m^{1-\beta} \mbox{ and hence }\theta \leq\beta\,,
\end{equation}
where $\theta$ is as in Proposition~\ref{prop-existence-exponent}.
 Note that for every point in $[K']\setminus A_y$, it should either be in $[K]\setminus A_x$ or come from interpolating a long interval in $B$. 
Therefore, we obtain that
$$\E | [K'] \setminus A_y| \leq \E |[K]\setminus A_x| + m^{-j} \E \mathcal{L} (B) \leq C_3 (m^{1-\beta} \vee \log m )\,.$$
 Combined with \eqref{eq-crude-upper-second}, it follows that
\begin{equation*}
\E (d_{(1, m^{j+1})}(0, \Upsilon))^2 \leq C_4((m^{1-\beta} \vee \log m ) \max_{s\leq m^j}\E (d_{(1, m^{j})}(0, s))^2  + m^2  \E (\max_{i\neq \tau} d_{(1, m^j)}(y_i, y_{i+1}))^2)\,,\end{equation*}
where we absorb the term $2\E(\max_{i\notin A_y} d_{(1, m^{j})}(y_{i}, y_{i+1})  \sum_{i\neq \tau} d_{(1, m^j)}(y_i, y_{i+1}))$ by its upper bound $\E((\max_{i\notin A_y} d_{(1, m^{j})}(y_{i}, y_{i+1}) )^2 + \E( \sum_{i\neq \tau} d_{(1, m^j)}(y_i, y_{i+1})))^2$.
Write $\Gamma_j = \max_{s\leq m^j} \E (d_{(1, m^j)} (0, s))^2$, we obtain that (as $\Upsilon$ is arbitrary number in $[8m^j, m^{j+1}]$)
\begin{equation} \label{eq-less-crude-second}
\Gamma_{j+1} \leq  C_4((m^{1-\beta} \vee \log m ) \Gamma_j  + m^2  \E (\max_{i\neq \tau} d_{(1, m^j)}(y_i, y_{i+1}))^2).
\end{equation}

Since $d_{(1, m^j)}(y_i, y_{i+1})$ are independent for all $i\in [K']$ and is independent of $\{y_i\}$, we condition on $K' = k'$ and $\{y_{i+1} - y_i = s_i\}$ and compute
\begin{align}\label{eq-to-integrate}
&\E  (\max_{i\neq \tau} ( d_{(1, m^j)}(y_i, y_{i+1}))^2 \mid K' = k', y_{i+1} - y_i = s_i) \nonumber\\
 =& \int_0^\infty z \P(\max_{i\neq \tau} d_{(1, m^j)}(0, s_i) \geq z) dz \nonumber \\
\leq&  \sum_{i_1, i_2 \in [k']}\int_0^\infty z \P( d_{(1, m^j)}(0, s_{i_1}) \geq z) \P( d_{(1, m^j)}(0, s_{i_2}) \geq z)dz \nonumber \\
 \leq& (k')^2 \max_{i_1, i_2\in [k']} \E (d_{(1, m^j)}(0, s_{i_1})) \int_0^\infty  \P( d_{(1, m^j)}(0, s_{i_2}) \geq z) dz \nonumber \\
 =&  (k')^2 \max_{s\in [0, m^j]} (\E d_{(1, m^j)}(0, s))^2\,,
\end{align}
where the third inequality follows from Markov's inequality and the last inequality we used the fact that $s_i \leq m^j$ for all $i\in [k']$.
Averaging both sides of \eqref{eq-to-integrate} over the conditioning and using Corollary~\ref{cor-first-moment-does-not-decrease} and the fact that $\E [K']^2\leq C_5 m^2 \beta^2$, we obtain that
$$\E  (\max_{i\neq \tau} ( d_{(1, m^j)}(y_i, y_{i+1}))^2 \leq  C_6 m^2 (\E d_{(1, m^j)}(0, m^j))^2\,.$$
Plugging it into \eqref{eq-less-crude-second}, we obtain that
\begin{equation*}
\Gamma_{j+1} \leq   C_7 ((m^{1-\beta} \vee \log m )\Gamma_j  + m^4  (\E d_{(1, m^j)}(0, m^j))^2)\,.\end{equation*}
Writing $\eta = C_7(m^{1-\beta} \vee \log m )$ and applying the above recursively for $j=0, \ldots, n-1$, we obtain
\begin{equation}\label{eq-recursion-second-moment}
\Gamma_n \leq C_8 m^4 \sum_{j=1}^{n-1} \eta^{n-j}    (\E d_{(1, m^j)}(0, m^j))^2\,.
\end{equation}
Using \eqref{eq-referee-beta-less-1} and the fact that $\theta>0$ in general, we can choose $m$ and $\theta'$ depending only on $\beta$ such that $\eta m^{-2\theta'} \leq 1/2$. Combined with \eqref{eq-recursion-lower-theta} and the fact that $M^*$ is $(\gamma, \theta')$-good (recall \eqref{eq-def-theta-good}), it follows that
$$\Gamma_n \leq  C_8 \gamma m^4 \sum_{j=1}^{n-1} \eta^{n-j} m^{-2\theta'(n-j)}  (\E d_{(1, m^n)}(0, m^n))^2 \leq2 C_8 \gamma m^4 (\E d_{(1, m^n)}(0, m^n))^2\,.$$
Since $m$ and $\gamma$ depend only on $\beta$, this completes the proof.
\end{proof}

\subsection{Lower bound on distances at good points}
In this subsection, we study the lower bounds on distances at good points.
For $0<\iota<1/3$, define the $\iota$-segment-to-segment distance
\begin{equation}\label{eq-def-segment-to-segment}
d^\iota_{(1, M)} (0, x)= \min_{y\in [0, \iota x], z\in [(1-\iota)x, x]} d_{(1, M)}(y, z)\,,
\end{equation}
to be the length of the geodesic joining  $[0, \iota x]$ and $[(1-\iota)x, x]$. We now show that the expected segment-to-segment distance has the same order of magnitude as the original point-to-point distance.
\begin{lemma}\label{lem-segment-point}
For all $\beta>0$, $0<\theta'<\theta$ and $\gamma>0$, there exists  a constant $\iota>0$ such that if $M^*$ is $(\gamma, \theta')$-good, we have
\begin{equation}\label{e:lem-segment-point1}
\E d^\iota_{(1, M^*)} (0, M^*)  \geq \tfrac{1}{2} \E d_{(1, M^*)} (0, M^*)\,.
\end{equation}
Therefore, for a constant $c>0$ that depends only on $\beta$ we have
\begin{equation}\label{e:lem-segment-point2}
\P(d^\iota_{(1, M^*)} (0, M^*)  \geq \tfrac{1}{4} \E d_{(1, M^*)} (0, M^*))\geq c\,.
\end{equation}
\end{lemma}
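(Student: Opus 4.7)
The plan is to use the triangle inequality in an auxiliary ``big graph'' metric to upper bound $d_{(1, M^*)}(0, M^*)$ by $d^\iota$ plus a boundary contribution, which by a diameter-versus-distance comparison and the $(\gamma, \theta')$-good hypothesis can be made a small fraction of $\E d_{(1, M^*)}(0, M^*)$ when $\iota$ is a small enough constant. The probability statement then follows by Paley--Zygmund using the second moment bound of Lemma~\ref{lem-second-moment-good}.

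For~\eqref{e:lem-segment-point1}, let $(y^*, z^*) \in [0, \iota M^*] \times [(1-\iota)M^*, M^*]$ be a minimizer defining $d^\iota$, and let $\tilde d(a, b)$ denote the distance using any path within $[0, M^*]$, so that $\tilde d \leq d_{(1, M^*)}$ on pairs in $[0, M^*]$, with equality at $(0, M^*)$. The triangle inequality for $\tilde d$ together with $\tilde d(y^*, z^*) \leq d_{(1, M^*)}(y^*, z^*) = d^\iota$ gives
\[
d_{(1, M^*)}(0, M^*) = \tilde d(0, M^*) \leq \tilde d(0, y^*) + d^\iota + \tilde d(z^*, M^*).
\]
Since $0, y^* \in [0, \iota M^*]$, a path inside $[0, \iota M^*]$ yields $\tilde d(0, y^*) \leq \mathrm{Diam}_{(1, \iota M^*)}(0, \iota M^*)$, and similarly $\tilde d(z^*, M^*) \leq \mathrm{Diam}_{(1, \iota M^*)}((1-\iota)M^*, M^*)$. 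Taking expectations and using translation invariance yields
\[
\E d_{(1, M^*)}(0, M^*) \leq \E d^\iota + 2 \E \mathrm{Diam}_{(1, \iota M^*)}(0, \iota M^*).
\]
I would next establish a uniform bound $\E \mathrm{Diam}_{(1, L)}(0, L) \leq C_\beta \E d_{(1, L)}(0, L)$ by converting the path-counting estimate of Lemma~\ref{lem-exploration} into tail bounds for individual constrained distances and union-bounding over a suitable discretization of $[0, L]$. Applied at $L = \iota M^*$ together with the $(\gamma, \theta')$-good hypothesis this gives $\E \mathrm{Diam}_{(1, \iota M^*)}(0, \iota M^*) \leq C_\beta \gamma \iota^{\theta'} \E d_{(1, M^*)}(0, M^*)$, so choosing $\iota$ small enough that $4 C_\beta \gamma \iota^{\theta'} \leq 1$ yields~\eqref{e:lem-segment-point1}.

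For~\eqref{e:lem-segment-point2}, since $d^\iota \leq d_{(1, M^*)}(0, M^*)$, Lemma~\ref{lem-second-moment-good} gives $\E (d^\iota)^2 \leq \E (d_{(1, M^*)}(0, M^*))^2 \leq C (\E d_{(1, M^*)}(0, M^*))^2 \leq 4C (\E d^\iota)^2$ by~\eqref{e:lem-segment-point1}. The Paley--Zygmund inequality then gives $\P(d^\iota \geq \tfrac{1}{2} \E d^\iota) \geq 1/(16C)$, and combining with~\eqref{e:lem-segment-point1} gives the desired lower bound with $c = 1/(16C)$.

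The main obstacle is the diameter-versus-distance comparison $\E \mathrm{Diam}_{(1, L)}(0, L) \lesssim \E d_{(1, L)}(0, L)$. Because $d_{(1, y)}(0, y)$ is not obviously monotone or Lipschitz in $y$ and the diameter is a supremum over a continuous two-parameter family of distances on $[0, L]$, one cannot bound the expected diameter by pointwise expectations plus a simple chaining argument; controlling it requires careful conversion of the first-moment estimates from Lemma~\ref{lem-exploration} into upper tail bounds for individual distances and then a union bound over a dyadic mesh that exploits the exponential decay.
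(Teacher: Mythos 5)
Your proof starts from the same triangle inequality decomposition as the paper, writing $d_{(1,M^*)}(0,M^*)\leq d_{(1,M^*)}(0,\tau)+d^\iota_{(1,M^*)}(0,M^*)+d_{(1,M^*)}(\tau',M^*)$ with $\tau\in[0,\iota M^*]$ and $\tau'\in[(1-\iota)M^*,M^*]$ the optimal endpoints, and your Paley--Zygmund step for~\eqref{e:lem-segment-point2} is correct and matches the paper. The divergence, and the gap, is in how you handle the two boundary terms. The paper's key observation is that the triple $(\tau,\tau',d^\iota_{(1,M^*)}(0,M^*))$ is measurable with respect to the edges that are not contained entirely inside $(0,\iota M^*)$ nor inside $((1-\iota)M^*,M^*)$, so that conditionally on $\tau$ the boundary distance $d_{(1,M^*)}(0,\tau)$ (which depends only on edges inside $[0,\tau]\subset(0,\iota M^*)$) has its unconditional law; this gives
\[
\E d_{(1,M^*)}(0,\tau)\;\leq\;\max_{x\leq\iota M^*}\E d_{(1,M^*)}(0,x)\;\leq\;\gamma\iota^{\theta'}\E d_{(1,M^*)}(0,M^*)
\]
directly from the $(\gamma,\theta')$-good hypothesis, with no diameter estimate needed. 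You instead bound $d_{(1,M^*)}(0,\tau)\leq\mathrm{Diam}_{(1,\iota M^*)}(0,\iota M^*)$ and invoke a comparison $\E\mathrm{Diam}_{(1,L)}(0,L)\lesssim\E d_{(1,L)}(0,L)$, which you say you would derive from Lemma~\ref{lem-exploration}.

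That step does not go through. Lemma~\ref{lem-exploration} controls the expected number of \emph{short} paths ($\E|\mathcal P_t|\leq C^t$) and the hop count; it is an enumeration tool for showing distances are not too small (lower bounds), and it provides no upper tail bound of the form $\P(d_{(1,L)}(0,x)>t)\to 0$ that a union bound over a mesh would require. In the paper, the expected-diameter control for $d_{(1,\cdot)}$ is obtained only in the proof of Theorem~\ref{thm-cts} via Markov's inequality applied to the higher moment bound of Lemma~\ref{lem-moments}; and Lemma~\ref{lem-moments} (as well as Lemma~\ref{lem-second-moment-good}) sits downstream of Lemma~\ref{lem-segment-point} in the logical chain. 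So the comparison you flag as ``the main obstacle'' cannot be established at this point without circularity, and the claim that it follows from ``converting the path-counting estimate of Lemma~\ref{lem-exploration} into tail bounds for individual constrained distances'' is not correct --- that lemma bounds in the wrong direction. You should instead exploit the measurability of $(\tau,\tau',d^\iota)$ with respect to the edges outside the two boundary intervals, which decouples $\tau$ from $d_{(1,M^*)}(0,\tau)$ and lets the good hypothesis do the work directly.
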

\begin{proof}
Let $\tau\in [0, \iota M^*]$ and $\tau' \in [(1-\iota)M^*, M^*]$ be such that $d^\iota_{(1, M^*)} (0, M^*) = d_{(1, M^*)} (\tau, \tau')$.
By the triangle inequality, we see that
\begin{equation}\label{eq-triangle-segment}
d_{(1, M^*)} (0, M^*) \leq  d_{(1, M^*)} (\tau, \tau') +  d_{(1, M^*)} (0, \tau) + d_{(1, M^*)} (\tau', M^*)\,.
\end{equation}
A key observation is that the triple $\{(\tau, \tau', d_{(1, M^*)} (\tau, \tau'))\}$ is measurable with respect to the $\sigma$-field generated by random edges $(u, v)$ where $\{u, v\} \not\subset (0, \iota M^*)$ and $\{u, v\}\not\subset ((1-\iota)M^*, M^*)$. In particular, we have that $\{(\tau, \tau', d_{(1, M^*)} (\tau, \tau'))\}$ is independent of $d_{(1, M^*)}(0, \tau)$ and $d_{(1, M^*)}(\tau', M^*)$. Combined with \eqref{eq-triangle-segment}, it follows that
\begin{align*}\E d_{(1, M^*)} (0, M^*)& \leq \E d_{(1, M^*)} (\tau, \tau') + 2\max_{x\in (0, \iota M^*)}\E d_{(1, M^*)} (0, x)\\
 &\leq \E d_{(1, M^*)} (\tau, \tau') + 2\gamma \iota^{-\theta'}\E d_{(1, M^*)} (0, M^*)\,,
\end{align*}
where the second inequality follows from the definition of $(\gamma, \theta')$-good. Choosing $\iota$ such that $\iota^{-\theta'} \gamma < 1/4$ completes the proof of~\eqref{e:lem-segment-point1}.  The lower bound follows by the second moment method since by Lemma~\ref{lem-second-moment-good},
\begin{equation*}
\E [d^\iota_{(1, M^*)} (0, M^*)]^2 \leq \E [d_{(1, M^*)} (0, M^*)]^2 \leq C [\E d_{(1, M^*)} (0, M^*)]^2 \leq 4 C [\E d^\iota_{(1, M^*)} (0, M^*)]^2. \qedhere
\end{equation*}
\end{proof}

We say $M^*$ is $(r, \gamma, \theta')$-supergood if for all $j=1, \ldots, \log \log_r M^*$ there exists $M^* \mathrm{e}^{-r^{j}}\leq M_j\leq  M^* \mathrm{e}^{-r^{j-1}}$ such that $M_j$ is $(\gamma, \theta')$-good.
\begin{lemma}\label{lem-super-good}
For all $\beta>0$ and $0<\theta'<\theta^*<\theta$, there exists $r>1$ such that if $M^*$ is $(1, \theta^*)$-good then $M^*$ is $(r, 1, \theta')$-supergood.
\end{lemma}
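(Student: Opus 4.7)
The plan is to reformulate via the auxiliary function $g(M) \df F(M)/M^{\theta'}$ with $F(M) \df \E d_{(1,M)}(0,M)$. A point $M$ is $(1, \theta')$-good precisely when $g(M) = \sup_{1 \le M' \le M} g(M')$, i.e.\ when $M$ is a running maximum of $g$. The super-good property then asks for a running maximum of $g$ in each log-scale interval $[M^* e^{-r^j}, M^* e^{-r^{j-1}}]$.

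For each $j$ in the prescribed range I take $M_j \df \arg\max_{1 \le M \le M^* e^{-r^{j-1}}} g(M)$ (the largest maximiser in case of ties). By construction $M_j$ is $(1, \theta')$-good, so the task reduces to showing $M_j \ge M^* e^{-r^j}$. Two ingredients combine. First, applying Lemma~\ref{lem-submultiplicative} to the factorisation $M^* = e^{r^{j-1}}\cdot M^*e^{-r^{j-1}}$ together with the $(1, \theta^*)$-good upper bound $F(e^{r^{j-1}}) \le (e^{r^{j-1}}/M^*)^{\theta^*} F(M^*)$, which is valid because $e^{r^{j-1}} \le M^*$ throughout the range $j \le \log\log_r M^*$, rearranges to the lower bound
\[
g(M_j) \;\ge\; g(M^* e^{-r^{j-1}}) \;\ge\; (M^* e^{-r^{j-1}})^{\theta^*-\theta'}/C_\beta.
\]
Second, $(1,\theta^*)$-goodness applied at $M_j$ directly gives $g(M_j) \le (M_j/M^*)^{\theta^*-\theta'}\cdot F(M^*)/(M^*)^{\theta^*}$. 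Comparing these two, under the contradictory hypothesis $M_j < M^* e^{-r^j}$ one extracts the key inequality
\[
F(M^*)/(M^*)^{\theta^*} \;\ge\; e^{r^{j-1}(r-1)(\theta^*-\theta')}/C_\beta.
\]

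To close the argument I would bound $F(M^*)/(M^*)^{\theta^*}$ from above by iterating submultiplicativity: writing $M^* = m^k$ with $m$ fixed and using the exponent identification $F(m) = m^{\theta + o_m(1)}$ from Section~\ref{s:firstmoment} yields $F(M^*)/(M^*)^{\theta^*} \le C (M^*)^{\theta - \theta^* + \epsilon}$ for any prescribed $\epsilon > 0$, with $m$ chosen depending only on $\beta, \epsilon$. The display above therefore forces
\[
r^{j-1}(r-1)(\theta^*-\theta') \;\le\; (\theta - \theta^* + \epsilon)\log M^* + O(1).
\]
Throughout the super-good range $r^{j-1}(r-1) \asymp r^j$ can be made to scale essentially as $\log M^*$ by choosing $r$ large. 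Fixing $\epsilon$ small and then $r$ large enough, both depending only on $\beta, \theta^*, \theta'$, violates this inequality and delivers the contradiction, which completes the argument.

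The main obstacle is precisely the bookkeeping in the last step: the multiplicative gain $e^{r^{j-1}(r-1)(\theta^*-\theta')}$ must beat the upper bound $C(M^*)^{\theta-\theta^*+\epsilon}$ uniformly in $j$, which forces the choice of $r$ to be coupled to the gaps $\theta^* - \theta'$ and $\theta - \theta^*$ simultaneously. This is exactly where the strict ordering $\theta' < \theta^* < \theta$ is exploited: the gap $\theta - \theta^* > 0$ permits an arbitrarily small $\epsilon$ in the submultiplicativity-based upper bound, while the gap $\theta^* - \theta' > 0$ provides the effective exponent converting powers of $r$ into the required exponential gain.
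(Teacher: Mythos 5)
Your overall strategy --- characterise $(1,\theta')$-good points as running maxima of $g(M)=F(M)/M^{\theta'}$ and take $M_j=\arg\max_{1\le M\le M^*e^{-r^{j-1}}}g(M)$ --- is essentially the paper's, but the specific lower bound you use for $g(M^*e^{-r^{j-1}})$ makes the final step collapse for small $j$. Combining Lemma~\ref{lem-submultiplicative} with $(1,\theta^*)$-goodness gives an \emph{absolute} lower bound $g(M^*e^{-r^{j-1}})\ge \tfrac{1}{C_\beta}(M^*e^{-r^{j-1}})^{\theta^*-\theta'}$, while the $(1,\theta^*)$-good upper bound at $M_j$ is most naturally read as $g(M_j)\le (M_j/M^*)^{\theta^*-\theta'}g(M^*)$, a bound \emph{relative to} $g(M^*)$. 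Because your lower bound is absolute rather than relative, the factor $F(M^*)/(M^*)^{\theta^*}$ does not cancel and your ``key inequality'' $F(M^*)/(M^*)^{\theta^*}\ge e^{r^{j-1}(r-1)(\theta^*-\theta')}/C_\beta$ cannot produce a contradiction when $j=1$: by Section~\ref{s:firstmoment}, $F(M^*)=(M^*)^{\theta+o(1)}$, so the left side grows like $(M^*)^{\theta-\theta^*+o(1)}\to\infty$, while the right side is the fixed constant $e^{(r-1)(\theta^*-\theta')}/C_\beta$ for any choice of $r$ independent of $M^*$. The assertion that ``$r^{j-1}(r-1)\asymp r^j$ can be made to scale essentially as $\log M^*$'' is only true for the largest admissible $j$; the super-good condition must hold for every $j\ge 1$, and for $j=1$ the exponent is bounded. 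Choosing $r$ to grow with $M^*$ is not permitted by the statement.

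The repair is to replace the submultiplicativity lower bound by the elementary triangle-inequality bound $F(M^*)\le (M^*/M)\,F(M)$, which yields $g(M^*e^{-r^{j-1}})\ge e^{-(1-\theta')r^{j-1}}g(M^*)$, i.e.\ a lower bound relative to $g(M^*)$. Now $g(M^*)$ cancels against the $(1,\theta^*)$-good upper bound $g(M_j)\le (M_j/M^*)^{\theta^*-\theta'}g(M^*)$, giving $M_j/M^*\ge \exp\bigl(-(1-\theta')r^{j-1}/(\theta^*-\theta')\bigr)$, which is $\ge e^{-r^j}$ once $r\ge (1-\theta')/(\theta^*-\theta')$. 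This is exactly the paper's choice $r=(1-\theta')/(\theta^*-\theta')$, arrived at there via $\psi(x)=\log g(e^x)$ and the two linear bounds $\psi(x)-\psi(m^*)\le(\theta^*-\theta')(x-m^*)$ and $\psi(x)-\psi(m^*)\ge(1-\theta')(x-m^*)$. Note that the slope $1-\theta'$ from the triangle inequality is what makes the argument work: it is strictly larger than $\theta^*-\theta'$ (since $\theta^*<1$), so the two bounds on $\psi$ sandwich the record values in geometrically shrinking windows.
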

\begin{proof}
For $x \geq 0$, define $\phi(x) = \E d_{(1, \mathrm{e}^x)}(0, \mathrm{e}^x)$ and $\psi(x) = \log \phi(x) - \theta' x$. It is clear that $\mathrm{e}^z$ is $(1,\theta')$-good if and only if $z$ is a record value for $\psi(x)$ (that is to say, $\psi(x)\leq \psi(z)$ for all $x\leq z$). Write $m^* = \log M^*$. Since $M^*$ is $(1, \theta^*)$-good, we have that for $x\leq m^*$
\begin{equation}\label{eq-psi-upper}
\psi(x) - \psi(m^*) = \log (\phi(x)/\phi(m^*)) - \theta'(x- m^*) \leq (\theta^* - \theta')(x-m^*)\,.\end{equation}
By Lemma~\ref{lem-renormalization}, we see that $\E d_{(1, \mathrm{e}^{m^*})}(0, \mathrm{e}^{m^*}) \leq \mathrm{e}^{m^* -x} \E d_{(1, \mathrm{e}^x)}(0, \mathrm{e}^x)$ or equivalently  $\phi(m^*) \leq \mathrm{e}^{m^* -x} \phi(x)$, and thus
\begin{equation}\label{eq-psi-lower}
\psi(x) - \psi(m^*) \geq (1 - \theta') (x - m^*)\,.
\end{equation}
For $k=0,1, \ldots$, we let
$$z_k = \min\{z: \psi(z) - \psi(m^*) \geq -m^*(\theta^* - \theta')\big(\tfrac{\theta^* - \theta'}{1- \theta'}\big)^k\}\,.$$
By \eqref{eq-psi-upper} and \eqref{eq-psi-lower}, we could deduce that
$$m^* \big(1 - \big(\tfrac{\theta^* - \theta'}{1- \theta'}\big)^{k}\big) \leq z_k \leq m^* \big(1 - \big(\tfrac{\theta^* - \theta'}{1- \theta'}\big)^{k+1}\big)\,.$$
Clearly $\{z_k\}$ are record values of $\psi(x) - \psi(m^*)$ and thus record values of $\psi(x)$. Hence, $\{\mathrm{e}^{z_k}\}$ are $\theta'$-good. Setting $r = (1 - \theta')/(\theta^* - \theta')$, we complete the proof.
\end{proof}

We record the following consequence of Lemmas~\ref{lem-renormalization} and \ref{lem-super-good} for future reference.
\begin{cor}\label{cor-referee}
For $\theta'<\theta$ and $r>1$, if $M^*$ is $(r, 1, \theta')$-supergood, then there exists $\kappa_0 = \kappa_0(\theta', \beta, r)$ such that $\kappa M^*$ is $(r, \gamma_{\kappa_0}, \theta')$-supergood for all $\kappa\geq \kappa_0$.
\end{cor}

Analogous to the definition of $ d^*_{(1, M^*)}(x, y)$, we define  $d^{*\iota}_{(1, M^*)}(x, y)$ be the segment-to-segment distance between $x$ and $y$ which allows to use edges in the whole line.
\begin{lemma}\label{lem-distance-tight}
For $\beta>0$, let $\theta'>0$ be specified as in the statement of Lemma~\ref{lem-second-moment-good}. For $r, \gamma\geq 1$, the following hold uniformly for all $(r, \gamma, \theta')$-supergood $M^*$. For any $\epsilon>0$, there exists $C>0$ so that
\begin{equation}\label{e:lem-distance-tight1}
\P(C^{-1}\E d_{(1, M^*)}(0, M^*)\leq d_{(1, M^*)}(0, M^*) \leq C \E d_{(1, M^*)}(0, M^*)) \geq 1-\epsilon\,.
\end{equation}
In addition, for any fixed $\kappa_0<1$, there exists a constant $c_{\kappa_0, \epsilon}$ such that
\begin{equation}\label{eq-lower-bar-segment}
\P(  d^*_{(1, M^*)}(0, \kappa M^*)\geq  c_{\kappa_0, \epsilon} \E d_{(1, M^*)}(0, M^*)) \geq 1-\epsilon \mbox{ for all } \kappa\geq \kappa_0\,.
\end{equation}
Furthermore, there exist $\iota, c>0$ such that
$$\P(  d^{*\iota}_{(1, M^*)}(0, \kappa M^*)\geq  c_{\kappa_0, \iota} \E d_{(1, M^*)}(0, M^*)) \geq c \mbox{ for all } \kappa\geq \kappa_0\,.$$
\end{lemma}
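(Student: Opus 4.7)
The lemma bundles three bounds, all resting on the supergood structure of $M^*$. The upper bound in \eqref{e:lem-distance-tight1} is immediate from Markov's inequality: $\P(d_{(1, M^*)}(0, M^*) \geq C \E d_{(1, M^*)}(0, M^*)) \leq 1/C$, so taking $C \geq 2/\epsilon$ suffices. The main effort goes into the matching lower bound. The plan is to combine the single-scale second moment estimate (Lemma~\ref{lem-second-moment-good}) with a concentration argument across many disjoint sub-intervals at one well-chosen good scale $M_j$, and then translate this into a lower bound on the full geodesic length via a union bound over candidate short paths produced by the exploration process of Lemma~\ref{lem-exploration}.

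Specifically, at each $(\gamma, \theta')$-good scale $M_j$ in the supergood family, Paley-Zygmund applied with the second moment bound, combined with Lemma~\ref{lem-segment-point}, yields the event $\mathcal{A}_j^{(i)} := \{d^\iota_{(1, M_j)}(iM_j, (i+1)M_j) \geq c_1 \E d_{(1, M_j)}(0, M_j)\}$ with probability at least $c_0 > 0$, independently across the $N_j = \lfloor M^*/M_j \rfloor$ disjoint sub-intervals. Choosing $M_j$ so that $N_j$ is sufficiently large (available since the supergood property supplies $\log\log_r M^*$ many scales spanning from $O(M^*)$ down to $O(1)$), a Chernoff bound produces at least $c_0 N_j / 2$ good sub-intervals with probability $\geq 1 - \epsilon/2$. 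To transfer this to a lower bound on $d_{(1, M^*)}(0, M^*)$, I would union-bound over candidate paths $P$ from $0$ to $M^*$ with $\|P\|_1 \leq c_2 \E d_{(1, M^*)}(0, M^*)$ using $\E |\mathcal{P}_t| \leq C^t$ (or its refinement $e^{C\sqrt{\eps_0}t}$ for paths with few large gaps). For each candidate the probability that it simultaneously exists as a subset of the LRP and has walks landing entirely outside the good sub-intervals decays fast, since the seg-to-seg events depend only on edges internal to each sub-interval and are thus independent of the edges forming the candidate's hops. The principal technical obstacle is exactly this union bound: one must control the collective probability that a short path both exists and circumvents $\Omega(N_j)$ randomly placed good sub-intervals using the scarce long edges needed to span them, using the $1/\ell^2$ density of long edges together with the enumeration and BK-style independence.

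For the unconstrained bound \eqref{eq-lower-bar-segment} we couple $d^*_{(1, M^*)}(0, \kappa M^*)$ with $d_{(1, M^*)}$ on a slightly enlarged interval $[-\eta M^*, (\kappa + \eta) M^*]$. Exterior long edges reaching into $[0, \kappa M^*]$ must have length $\geq \eta M^*$, so by integrating the Poisson density $\beta/\ell^2$ their expected count is $O_\beta(1/\eta)$; taking $\eta$ small (depending on $\epsilon$), with probability $\geq 1 - \epsilon/2$ no such edge substantially shortens the distance, and then \eqref{e:lem-distance-tight1} applied at scale $\kappa M^*$ (which inherits the supergood property with $\kappa_0$-dependent constants) completes the bound. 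Finally, the statement concerning $d^{*\iota}$ follows from \eqref{eq-lower-bar-segment} by the triangle-inequality argument of Lemma~\ref{lem-segment-point}: if $(\tau, \tau')$ attains the seg-to-seg infimum, then $d^*_{(1, M^*)}(0, \kappa M^*) \leq d^*_{(1, M^*)}(\tau, \tau') + d^*_{(1, M^*)}(0, \tau) + d^*_{(1, M^*)}(\tau', \kappa M^*)$, and the last two expectations are bounded by $O(\iota^{1-\theta'}) \E d_{(1, M^*)}(0, M^*)$ via the $(\gamma, \theta')$-good property. Taking $\iota$ small absorbs these corrections, and Paley-Zygmund combined with the second moment bound from Lemma~\ref{lem-second-moment-good} upgrades the resulting first-moment inequality into the advertised positive-probability lower bound with a uniform $c > 0$.
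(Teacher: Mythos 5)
Your overall plan for the lower bound in \eqref{e:lem-distance-tight1} --- tile $[0,M^*]$ into $N_j = M^*/M_j$ sub-intervals at a single good scale $M_j$, use Paley--Zygmund plus Lemma~\ref{lem-segment-point} to get many ``expensive'' sub-intervals via a Chernoff bound, then union-bound over short paths from the exploration process --- differs substantially from the paper's argument, and it has a genuine gap at the step you yourself flag as the ``principal technical obstacle.'' The problem is that a candidate short path does not need its walks to \emph{land in} the good sub-intervals at all: it can simply jump over them using long edges of length greater than $M_j$. The expected number of edges of length $\geq M_j$ with both endpoints in $[0,M^*]$ is of order $\beta M^*/M_j$, i.e.\ comparable to (or larger than) the number of sub-intervals, so a geodesic has ample opportunity to bypass the expensive sub-intervals entirely. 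Moreover, a sub-interval event $\mathcal{A}_j^{(i)}$ involving $d^{*\iota}$ is not robust against such bypasses: a long edge from the left of $iM_j$ to the right of $(i+1)M_j$ renders the internal seg-to-seg distance irrelevant. So ``$\Omega(N_j)$ good sub-intervals with high probability'' does not transfer to a lower bound on $d_{(1,M^*)}(0,M^*)$ without controlling bypassing edges, and you give no such control. Your proposed union bound over candidate paths also confuses the roles: the enumeration $\E|\mathcal P_t|\leq C^t$ bounds the number of candidate \emph{traces}, but the relevant failure event (``a short trace exists \emph{and} is realized as an LRP path'') is what must be shown unlikely, and the independence you invoke between the sub-interval crossing distances and the candidate's hops is not the step that's missing --- what's missing is an argument that the candidate path must pay for enough sub-intervals at all.

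The paper avoids this by taking a handful (a number $K$ depending only on $\epsilon$, not on $M^*$) of nested \emph{shells around the origin} at geometrically separated good scales $M_1<\dots<M_K$ with $M_i\geq M^*/L$ and $\sum_i M_i\leq\kappa M^*$. Because the shells surround $0$, the geodesic from $0$ to $M^*$ is forced to exit each shell; the only alternative to a seg-to-seg crossing of the $j$-th shell is a single long edge bypassing it, whose probability $\P(A_j)\lesssim \beta x_{j-1}/(\iota M_j)$ is made small by taking the ratio $R=M_j/M_{j-1}$ large. Then on $A^c$, the contributions from the $K$ shells are independent and each gives a constant-probability lower bound of order $\E d_{(1,M^*)}(0,M_j)\gtrsim \E d_{(1,M^*)}(0,M^*)/L$ via Lemma~\ref{lem-segment-point}; taking $K$ large drives the failure probability below $\epsilon$. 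This is structurally essential: the multi-scale shell geometry is what makes the bypassing probabilities summable, and no single-scale tiling can substitute for it. Your handling of \eqref{eq-lower-bar-segment} via a coupled enlargement and your triangle-inequality reduction for the $d^{*\iota}$ clause are plausible auxiliary moves, but both presuppose \eqref{e:lem-distance-tight1}, so they inherit the gap. For \eqref{eq-lower-bar-segment} and the final claim the paper simply reruns the same shell construction (noting a seg-to-seg variant gives only constant rather than $1-\epsilon$ probability), which is cleaner than a coupling.
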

\begin{proof}
The upper bound on $d_{(1, M^*)}(0, M^*)$ follows from Markov's inequality. For the lower bound, we give a proof for $ d^*_{(1, M^*)}(\cdot, \cdot)$, and the proof on the segment-to-segment distance will follow in the same manner (note that for segment-to-segment distance, we only show a result of with positive chance).   Let $\iota>0$ be a number sufficiently small to satisfy the assumption in the statement of Lemma~\ref{lem-segment-point}. Therefore, by Lemma~\ref{lem-segment-point}, we get for all $\theta'$-good $M$
\begin{equation}\label{eq-lower-segment}
\P(d^{\iota}_{(1, M)}(0, M) \geq  \tfrac{1}{4}\E d_{(1, M)}(0, M)) \geq c'\,,\end{equation}
where $c'>0$ is a constant (uniformly for all $\theta'$-good $M$). Since $M^*$ is super good, we can then choose $M_1, \ldots, M_K$ such that all $M_i$'s are $\theta'$-good, and
$$M_{i+1}/M_i\geq R\,, \,M_1 \geq M^*/L\,, \mbox{ and } \mbox{$\sum_{i=1}^K$} M_i \leq \kappa M^*\,,$$ where $R, L$ are sufficiently large numbers to be fixed (independent of $M^*$). Write $x_j = \sum_{i=1}^j M_j$ and $x_0 = 0$. Let $A_j$ be the event that there exists a long edge joining $(-x_{j-1}, x_{j-1})$ and $(x_{j-1} + \iota M_j, M^*)$ or $(-M^*, -x_{j-1} - \iota M_j)$, and $B_j$ be the event that there exists a long edge joining $(-x_{j-1} - (1- \iota)M_j, x_{j-1} + (1-\iota)M_j)$ and $(x_j, M^*)$ or joining $(-M^*, -x_j)$ and $(-x_{j-1} - (1- \iota)M_j, x_{j-1} + (1- \iota)M_j)$. First of all, we see that
$$\sum_{j=1}^K\P(A_j) \leq \sum_{j=1}^K\frac{2\beta x_{j-1}}{\iota M_j} \leq \epsilon/3\,,$$
if we select $R$ large enough. Let $A = \cup_{j=1}^K A_j$. When $A^c$ and $B_j^c$ hold, for a path to escape $(-x_{j-1},x_{j-1})$ it must perform a segment to segment crossing of $(x_{j-1}, x_{j})$ or $(-x_{j}, -x_{j-1})$ and hence we see that
$$ d^*_{(1, M^*)}(0, M^*) \geq \one_{A^c} \sum_{j=2}^K \one_{B_j^c} \min\{ d^{\iota}_{(1, M^*)}(x_{j-1}, x_{j}), d^{\iota}_{(1, M^*)}(-x_{j}, -x_{j-1})\}\,.$$
Observe that on $A^c$, the events $B_j$'s are independent of each other, and the random variables $d^{\iota}_{(1, M^*)}(x_{j-1}, x_{j}), d^{\iota}_{(1, M^*)}(-x_{j}, -x_{j-1})$ for $j\geq 1$ are independent of each other, and from $B_j$'s. Combined with \eqref{eq-lower-segment}, it follows that
$$\P(\one_{B_j^c} d^{\iota}_{(1, M^*)}(x_{j-1}, x_{j}) \geq \tfrac{1}{4} \E d_{(1, M^*)}(0, M_j)) \geq c' \P(B_j^c) \geq c^*,$$
where $c^*>0$ is a constant. Furthermore, we have $\P(B_j \mid A^c) = \P(B_j \mid A_j^c)$, which is bounded away from 0. So if we choose $K$ sufficiently large, we obtain that with probability at least $1-\epsilon$,
$$ d^*_{(1, M^*)}(0, M^*) \geq \tfrac{1}{4} \min_{j\in [K]} \E d_{(1, M^*)}(0, M_j) \,.$$
Recalling that $M_i\geq M^*/L$, we see that  $\min_{j\in K} \E d_{(1, M^*)}(0, M_j) \geq \E d_{(1, M^*)}(0, M^*)/L$ (by the triangle inequality), completing the proof of \eqref{e:lem-distance-tight1}.  The rest of the lemma follows similarly.
\end{proof}
\subsection{Super-multiplicativity}
In this subsection, we prove a weak version of super-multiplicativity for distances in long range percolation.
\begin{lemma}\label{lem-super-multiplicative}
For all $\beta>0$, there exists $0<\theta^*<\theta$ such that if $M^*$ is $(1,\theta^*)$-good, we have
$$\E d_{(1, M^* M)} (0, M^* M) \geq c \E d_{(1, M^*)}(0, M^*) \cdot \E d_{(1, M)}(0, M)\,,$$
for all $M>0$ and a constant $c = c(\beta)>0$.
\end{lemma}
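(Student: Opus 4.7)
The plan is to exploit a two-scale decomposition of the edges of $d_{(1,M^*M)}$ into ``long'' edges of length $\geq M^*$ and ``short'' edges of length $<M^*$. By Lemma~\ref{lem-renormalization} and the Poisson structure, the long edges, after rescaling by $1/M^*$, form an independent copy of the coarse model $d_{(1,M)}$ on $[0,M]$; write $\tilde D \deq d_{(1,M)}(0,M)$, so $\E\tilde D = \E d_{(1,M)}(0,M)$. Independently, within each block $B_j=[(j-1)M^*,jM^*]$ the short edges with both endpoints in $B_j$ form an independent copy of $d_{(1,M^*)}$. Choose $\theta^*<\theta$ small enough that, by Lemma~\ref{lem-super-good}, $(1,\theta^*)$-goodness of $M^*$ entails $(r,1,\theta')$-supergoodness for the $\theta'$ of Lemma~\ref{lem-second-moment-good}; this activates the block-wise segment-to-segment lower bound of Lemma~\ref{lem-distance-tight}.

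Any fine-scale path $P$ from $0$ to $M^*M$ is determined (up to its walking portions) by its long-edge skeleton $S$. Decomposing at long-edge endpoints, $\|P\|_1 = \sum_i d_{(1,M^*)}(a_i,b_i)$ summed over the inter-hop segments, and the total euclidean length $\sum_i(b_i-a_i)$ is the coarse walking cost of $S$, which is bounded below by $M^*\tilde D$ (the coarse-scale minimum). It thus suffices to prove that for every $S$, $\sum_i d_{(1,M^*)}(a_i,b_i)\geq c\cdot \tilde D\cdot \E d_{(1,M^*)}(0,M^*)$ with high conditional probability.

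For each segment $(a_i,b_i)$ of length $\geq M^*$, at least $\lfloor (b_i-a_i)/M^*\rfloor$ blocks are fully contained in it; summing, the number of fully-contained blocks is at least $\tilde D - n_S$, where $n_S$ is the number of inter-hop segments. Lemma~\ref{lem-distance-tight} yields that each such block has its within-block segment-to-segment $d_{(1,M^*)}$-distance at least $c_1\E d_{(1,M^*)}(0,M^*)$ with probability at least $c_0>0$, independently across blocks. A Chernoff bound then gives, conditional on the long edges, that for the fixed $S$,
\[
\sum_i d_{(1,M^*)}(a_i,b_i)\geq c_2(\tilde D - n_S)\cdot \E d_{(1,M^*)}(0,M^*)
\]
with probability at least $1-\mathrm{e}^{-\alpha(\tilde D - n_S)}$. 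Enumerating skeletons $S$ of coarse cost $\leq k$ by at most $\mathrm{e}^{Ck}$ via Lemma~\ref{lem-exploration} applied to the coarse model, and combining with the same lemma's tail bound on the number of hops to control $n_S\leq(1-\eta)\tilde D$ for an appropriate $\eta>0$ with overwhelming probability, a union bound over $S$ gives $d_{(1,M^*M)}(0,M^*M)\geq c\tilde D\cdot \E d_{(1,M^*)}(0,M^*)$ with high conditional probability. Taking expectation over the long edges and using $\E\tilde D=\E d_{(1,M)}(0,M)$ closes the argument.

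The main obstacle is making the union bound balance: the Chernoff exponent $\alpha$ is proportional to the per-block probability $c_0$ from Lemma~\ref{lem-distance-tight}, which is only a fixed small constant, whereas the coarse-skeleton enumeration constant $C$ depends on $\beta$. I would boost $\alpha$ by grouping blocks into chunks of size $\ell=\ell(\beta)$ chosen so that the per-chunk success probability (that a constant fraction of the $\ell$ independent blocks satisfy the s2s lower bound) exceeds $1-\mathrm{e}^{-10C}$; alternatively one may iterate Lemma~\ref{lem-distance-tight} across the supergood sub-scales furnished by supergoodness. A secondary issue is to handle segments of length $<M^*$ and short edges straddling block boundaries; these contribute only lower-order terms provided $n_S\ll\tilde D$, a bound itself following from Lemma~\ref{lem-exploration}.
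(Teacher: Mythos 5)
Your high-level plan matches the paper's: decompose paths into a coarse skeleton of edges of length $\geq M^*$, enumerate skeletons via Lemma~\ref{lem-exploration}, lower-bound the fill-in cost by segment-to-segment crossings at scale $M^*$, and close with a union bound before integrating out the coarse scale. However, two of the steps you wave through are where the actual difficulty lies, and as written the argument does not close.

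First, the claim that each fully-contained block contributes at least $c_1 \E d_{(1,M^*)}(0,M^*)$ is not justified: a fill-in geodesic can use edges of length up to $M^*$ that straddle block boundaries and thereby \emph{skip over} a large part of the block, so the block's intrinsic segment-to-segment distance does not automatically lower-bound the crossing cost. You flag ``short edges straddling block boundaries'' as a secondary issue absorbed into lower-order terms, but it is the central obstruction. The paper's proof devotes real machinery to it: it places intervals $I_i$ of length $M^\star = M^*/2(C_2\vee 1)$ inside the gaps with mutual separation $\geq M^*$ (so that the straddling events for different intervals are genuinely independent), introduces the $\iota$-indented distance $d^{*\iota}$, and conditions on the event $B_i$ that no short edge jumps the first or last $\iota$ fraction of $I_i$ so that $d^{*\iota}_{(1,M^*)}(x_i,y_i) = d^\iota_{(1,M^*)}(x_i,y_i)$. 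Without an analogue of this you cannot reduce the crossing cost to Lemma~\ref{lem-segment-point}, and your ``independently across blocks'' claim fails for adjacent blocks, whose short-edge environments overlap. A related bookkeeping problem: $n_S$ is controlled by the hop tail of Lemma~\ref{lem-exploration} only as $n_S \lesssim \alpha t/M^*$ with $\alpha \geq C$, which does not give $n_S \leq (1-\eta)\tilde D$; the number of gaps can well exceed $\tilde D$, making $\tilde D - n_S$ negative.

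Second, and more fundamentally, the union bound over skeletons cannot be balanced by the chunking device you propose. If the per-block success probability is a fixed constant $c_0$, then over $\tilde D$ blocks the best large-deviation rate for ``few successes'' is at most $\log\frac{1}{1-c_0}$ per block, no matter how you group blocks into chunks of size $\ell$ (the per-chunk failure is at best $(1-c_0)^\ell$, giving rate $\ell\log\frac{1}{1-c_0}$ over $\tilde D/\ell$ chunks, i.e.\ the same $\tilde D\log\frac{1}{1-c_0}$). This cannot beat the $\mathrm{e}^{C\tilde D}$ entropy of coarse skeletons unless $c_0 > 1 - \mathrm{e}^{-C}$, which is not available. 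The paper's proof resolves this with a two-case split that you omit: for skeletons with few gaps of length $\geq \kappa M^*$, one uses the \emph{refined} entropy bound $\E|\mathcal P_{t,\epsilon}|\leq \mathrm{e}^{C\sqrt{\epsilon}t}$ of Lemma~\ref{lem-exploration} (you invoke only the crude $\mathrm{e}^{Ct}$), so the entropy $\mathrm{e}^{C_1\sqrt{\kappa}\,t/M^*}$ is beaten by the $\mathrm{e}^{-c_4 t/M^*}$ failure rate once $\kappa$ is small; for skeletons with many large gaps, one uses the boosted statement~\eqref{eq-point-whole-line-lower} (derived from supergoodness across scales via Lemma~\ref{lem-distance-tight}), where the per-gap failure probability $g_1(\epsilon)$ tends to $0$ as $\epsilon\to 0$, giving $C^{t/M^*}g_1(\epsilon)^{\kappa t/M^*}\to 0$. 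Without either the refined enumeration bound or the $\epsilon$-boosted per-gap probability, the constants cannot be made to balance.
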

\begin{proof}
Let $\iota$ be a sufficiently small constant as in the statement of Lemma~\ref{lem-distance-tight}, and suppose $\theta^*$ is sufficiently close to $\theta$. Then, an application of Lemmas~\ref{lem-super-good} and \ref{lem-distance-tight} as well as Corollary~\ref{cor-referee} gives that \begin{equation}\label{eq-point-whole-line-lower}
\P(  d^*_{(1,  M^*)}(0, \kappa M^*) \geq \epsilon \cdot c_{\kappa_0} \E d_{(1, M^*)}(0, M^*))  \geq g(\epsilon)\,, \mbox{ for all } \kappa\geq \kappa_0\,,
\end{equation}
where $c_{\kappa_0}$ is a constant that depends on $\kappa_0$ and $g(\epsilon)\to 1$ as $\epsilon\to 0$.
Both the result and the proof of the current lemma are strengthened versions of those of Proposition~\ref{prop-theta-0-1}. Here \eqref{eq-point-whole-line-lower} plays a crucial role in the improvement. Following the proof of Proposition~\ref{prop-theta-0-1}, we construct short paths joining $0$ and $MM^*$ using edges of lengths in $(1, MM^*)$ in the following two steps: (1) construct a path $P$ joining $0$ and $MM^*$ using edges in $(M^*, MM^*)$; (2) for each gap $(L_i, R_i)$ in $P$, find the geodesic between $L_i$ and $R_i$ using edges in $(1, M^*)$. Combine the path $P$ with all the geodesics joining the gaps yields a path $P^*$ joining $0$ and $MM^*$ using edges of lengths in $(1, MM^*)$. Note that the geodesic in $d_{(1, MM^*)}(\cdot, \cdot)$ can be constructed in this way, and that it suffices to study those $P^*$'s which use each edge at most once (thus $P^*$ satisfies this property in all its appearances). In order to bound the distance from below, it suffices to give a union bound on all such paths.

For each $P$, denote by $\{G_i: i=1, \ldots, N\}$ the length of the gaps (so $\sum_{i=1}^N G_i = \|P\|_{1}$). For $\kappa>0$, let $N_\kappa = N_{\kappa,P} = |\{i\in N: G_i \geq \kappa M^*\}|$. Write $$\tilde{\mathcal{P}}_{t, \kappa} = \{P \in \tilde{\mathcal{P}}_t: N_\kappa \leq \kappa t/M^*\}\,,$$ where $\tilde{\mathcal{P}}_t$ is the collection of paths $P$ starting from origin using edges of lengths in $(M^*, M^*M)$ such that $\|P\|_1 \in(t/2, t)$ (Note that the tilde version of $\mathcal{P}_{\cdot, \cdot}$ is the same as the non-tilde version defined in Section~\ref{sec:explore} up to scaling). Then, after scaling by $M^*$ Lemma~\ref{lem-exploration} gives that
\begin{equation}\label{eq-P-t-kappa}
\E |\tilde{\mathcal{P}}_{t, \kappa}| \leq \mathrm{e}^{C_1 \sqrt{\kappa} t /M^*}
 \end{equation}
for a constant $C_1>0$. Write $\tilde{\mathcal{P}}_t^C = \{P \in \tilde{\mathcal{P}}_t: N\geq Ct/M^*\}$. Scaling by $M^*$ and applying Lemma~\ref{lem-exploration} again, we can choose $C_2>0$ such that
\begin{equation}\label{eq-P-t-C}
\P(\tilde{\mathcal{P}}_t^{C_2} \neq\emptyset) \leq \mathrm{e}^{-c_2 t/M^*}\mbox{ for a constant }c_2>0\,.
\end{equation} Now, fix  a path $P \in \tilde{\mathcal{P}}_{t, \kappa} \setminus \tilde{\mathcal{P}}_t^{C_2}$.
We see that we can select intervals $I_i = (x_i, y_i)$ which are placed in the gaps of path $P$ of length $M^\star = M^*/2(C_2\vee 1)$ for $i=1, \ldots, K = t/16M^*$ such that the minimal distance between these intervals in the same gap is at least $M^*$.  However, if the gaps overlap, intervals from different gaps may overlap. By Corollary~\ref{cor-referee} we see that $M^\star$ is $(\gamma, \theta^*)$-good for a $\gamma = \gamma(C_2, \beta)>0$. Choose $\iota>0$ to satisfy the assumption in Lemma~\ref{lem-segment-point}. Define $\mathcal J$ to be collection of subsets $J\subseteq [K]$ with the following property: there exist disjoint edges $\{e_j: j\in J\}$ of length in $(1, M^*)$ such that $e_j$ jumps over the first $\iota$ fraction or the last $\iota$ fraction of interval $I_j$ for every $j\in J$. Then, we have
$$\|P^*\|_{1} \geq \min_{J\in \mathcal J}\sum_{j\not\in J}  d^{*\iota}_{(1, M^*)} (x_i, y_i)\,.$$
Let $B_i$ be the event that there exists an edge of length in $(1, M^*)$ jumping over $I_i$, we have $\P(B_i) \leq c_{\beta, \iota, C_2} < 1$ for all $i$. Applying BK's inequality, we obtain that
\begin{equation}\label{eq-referee-2}
\P(J \in \mathcal J) \leq c_{\beta, \iota, C_2}^{|J|}\,, \mbox{ for all } J\subseteq [K]\,.
\end{equation}
Since the intervals $I_i$ are well separated, the events $B_i$ from the same gap are independent of each other. Conditioning on $B_i$, we see that $ d^{*\iota}_{(1, M^*)} (x_i, y_i) = d^\iota_{(1, M^*)}(x_i, y_i)$. Apply Lemma~\ref{lem-segment-point} and the BK inequality (since we may ask that the paths in the gaps are disjoint) to control $d^{*\iota}_{(1, M^*)} (x_i, y_i) = d^\iota_{(1, M^*)}(x_i, y_i)$ and using \eqref{eq-referee-2}, we deduce that
\begin{equation}\label{eq-case-1}
\P(\|P^*\|_{1} \leq c_3(\gamma) d_{(1, M^*)}(0, M^*) t/M^*) \leq \mathrm{e}^{-c_4(\iota, C_2, \gamma) t/M^*}\,.
\end{equation}
Now, let us consider $P\in \tilde{\mathcal{P}}_t \setminus \tilde{\mathcal{P}}_{t, \kappa}$. In this case, let $(x_i, y_i)$ be the gaps in $P$ such that $|y_i-x_i| \geq \kappa M^*$. Therefore,
$$\|P^*\|_{1} \geq \sum_{i=1}^{\kappa t/M^*}  d^*_{(1, M^*)}(x_i, y_i)\,.$$
Applying \eqref{eq-point-whole-line-lower} and the BK inequality (analogous to the derivation of \eqref{eq-BK-referee}), we obtain that
$$\P(\|P^*\|_{1} \leq \epsilon c_\kappa \kappa d_{(1, M^*)}(0, M^*) t/M^*) \leq g_1(\epsilon)^{\kappa t/M^*}\,,$$
for some $g_1(\epsilon)\to 0$ as $\epsilon \to 0$. Combined with \eqref{eq-P-t-C} and \eqref{eq-case-1}, we obtain that (recall that $\E |\mathcal{P}_t|\leq C^{t/M^*}$)
\begin{align*}&\P(\exists P \in \mathcal{P}_{t}: \|P^*\|_{1} \leq (\epsilon c_\kappa \kappa \wedge c_3(\gamma))\kappa d_{(1, M^*)}(0, M^*) t/M^*)\\
 \leq & \mathrm{e}^{-c_2 t/M^*} +  \E |\tilde{\mathcal{P}}_{t, \kappa}| \mathrm{e}^{-c_4(\iota, C_2, \gamma) t/M^*} + \E |\tilde{\mathcal{P}}_t \setminus \tilde{\mathcal{P}}_{t, \kappa}| g_1(\epsilon)^{\kappa t/M^*} \\
\leq & \mathrm{e}^{-c_2 t/M^*} + \mathrm{e}^{-c_4(\iota, C_2, \gamma) t/M^*} \mathrm{e}^{C_1 \sqrt{\kappa} t/M^*} + C^{t/M^*}g_1(\epsilon)^{\kappa t/M^*}\,,\end{align*}
where in the last inequality we have plugged in \eqref{eq-P-t-kappa} and Lemma~\ref{lem-exploration}.
Choosing $\kappa$ sufficiently small and then choosing $\epsilon$ sufficiently small depending on $\kappa$ yields that
\begin{equation}\label{eq-almost-finish-super-multiplicative}
\P(\exists P\in \mathcal{P}_t :\|P^*\|_{1} \leq c^\star \E d_{(1, M^*)}(0, M^*) t/M^*) \leq 3\mathrm{e}^{-c^\star t/M^*}\,,\end{equation}
where $c^\star>0$ is a constant that depends on all the constants that appear so far (but independent of $t$ and $M^*$). Denote by $H(\cdot)$ the distribution function of $d_{(1, M)}(0, M)$. Note that $$M^*d_{(1, M)}(0, M) \stackrel{\mathrm{law}}{=} d_{(M^*, MM^*)}(0, MM^*)$$ by Lemma~\ref{lem-renormalization}. Combined with \eqref{eq-almost-finish-super-multiplicative}, an analogous derivation to \eqref{eq-referee-page-8-1} and \eqref{eq-referee-page-8-2} yields that
\begin{align*}
\E d_{(1, MM^*)}(0, MM^*) &\geq c^\star \E d_{(1, M^*)}(0, M^*) \int_0^\infty t  (1 - \sum_{i=0}^\infty 3 \mathrm{e}^{- c^\star 2^i t}) dH(t) \\
&\geq \hat c^\star \E d_{(1, M^*)}(0, M^*) \E d_{(1, M)}(0, M)\,,
\end{align*}
for a constant $\hat c^\star>0$, completing the proof of the lemma.
\end{proof}

Next, we prove the power law on the expected distance in long range percolation.
\begin{lemma}\label{lem-power-law}
For all $\beta>0$, there exist constant $C, c>0$ such that $c M^\theta \leq \E d_{(1, M)}(0, M) \leq CM^\theta$ for all $M>0$, where $\theta$ is from Proposition~\ref{prop-existence-exponent}.
\end{lemma}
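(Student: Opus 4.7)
The plan is to combine the submultiplicativity (Lemma~\ref{lem-submultiplicative}) and supermultiplicativity (Lemma~\ref{lem-super-multiplicative}) of the first moment $f(M) := \E d_{(1,M)}(0,M)$, bridged by the asymptotic $\log f(M)/\log M \to \theta$ established in Subsection~\ref{subsec:exponent}, to turn the known power law into a bona fide two-sided comparison with explicit constants.

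I would first dispatch the lower bound $f(M) \geq c M^\theta$ by a Fekete argument. By Lemma~\ref{lem-renormalization}, $f(M) = M\,\E d_{(1/M,1)}(0,1)$, so Lemma~\ref{lem-submultiplicative} rewrites as $f(MM') \leq C_\beta f(M) f(M')$ for all $M, M' \geq 1$. Setting $\tilde G(x) := \log(C_\beta f(e^x))$, this is exactly the subadditivity $\tilde G(x+y) \leq \tilde G(x) + \tilde G(y)$, so Fekete's lemma yields $\tilde G(x)/x \to \inf_{y>0} \tilde G(y)/y$. Since the limit equals $\theta$, we conclude $\tilde G(y) \geq \theta y$ for every $y>0$, i.e.\ $f(M) \geq M^\theta/C_\beta$.

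The upper bound is the main content. I would first establish $f(M^*) \leq (M^*)^\theta/c$ at every $(1,\theta^*)$-good $M^*$, where $\theta^* < \theta$ is as in Lemma~\ref{lem-super-multiplicative}. Iterating $f(M^* M) \geq c f(M^*) f(M)$ along $M = M^*$ gives $f((M^*)^k) \geq c^{k-1} f(M^*)^k$. Since $\log f(N)/\log N \to \theta$, for every $\epsilon > 0$ the bound $f(N) \leq N^{\theta + \epsilon}$ holds for all sufficiently large $N$; applied to $N = (M^*)^k$ with $k$ large, this yields $f(M^*) \leq (M^*)^{\theta + \epsilon}\, c^{-(k-1)/k}$, and sending $k \to \infty$ then $\epsilon \to 0$ proves the claim.

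To extend the upper bound to arbitrary $M$, I would use that $(1,\theta^*)$-good points are precisely the record-times of $M \mapsto f(M)/M^{\theta^*}$, a function that tends to infinity since $\theta^* < \theta$, so such records occur at arbitrarily large scales. Given large $M$, let $M^* \leq M$ be the largest $(1,\theta^*)$-good point; by the record property $f(M)/M^{\theta^*} \leq f(M^*)/(M^*)^{\theta^*}$, and combining with the previous step,
\[ f(M) \leq (M/M^*)^{\theta^*} f(M^*) \leq M^\theta (M^*/M)^{\theta - \theta^*}/c \leq M^\theta/c, \]
using $M^* \leq M$ and $\theta > \theta^*$; the small-$M$ regime is absorbed into constants via $f(M) \leq M$. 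The main obstacle throughout is the upper bound at a good point: supermultiplicativity must be iterated along the geometric progression $(M^*)^k$ so as to transfer the asymptotic exponent at infinity back down to the single scale $M^*$, matching the Fekete lower bound up to a multiplicative constant. Once that step is in hand, the remaining two --- the Fekete lower bound and the record-based extension to all $M$ --- are essentially bookkeeping.
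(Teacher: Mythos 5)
Your proposal is correct in substance and takes a genuinely different---and arguably cleaner---route than the paper's. For the lower bound, the paper never invokes Fekete's lemma directly: instead it sets up a dyadic comparison of the exponents $\theta_k := \log_{m_k} f(m_k)$ along a chain of good points $m_k \in (m^k, m^{k+1})$, and shows $|\theta_{k2^{n+1}} - \theta_{k2^n}| \lesssim (k2^n)^{-1}$, which telescopes to $|\theta_k - \theta| \lesssim 1/k$ and therefore yields both bounds simultaneously at the discrete scales $m_k$, which it then extends to all $M$ via Corollary~\ref{cor-first-moment-does-not-decrease} and submultiplicativity. Your version disentangles the two directions: Fekete applied to $\tilde G(x) = \log(C_\beta f(\mathrm{e}^x))$ gives the lower bound $f(M) \ge M^\theta/C_\beta$ in one stroke, and iterating the supermultiplicative inequality $f((M^*)^k) \ge c^{k-1}f(M^*)^k$ at a fixed good $M^*$ and letting $k\to\infty$ pins $f(M^*)\le (M^*)^\theta/c$ by transporting the asymptotic exponent from $(M^*)^k$ back to $M^*$. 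This is a tidy isolation of exactly what supermultiplicativity buys. The paper's version is more self-contained in that it does not need to appeal separately to the continuous Fekete lemma.

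One small gap worth patching: in the extension to arbitrary $M$, you pick the largest $(1,\theta^*)$-good $M^*\le M$ and assert ``by the record property $f(M)/M^{\theta^*}\le f(M^*)/(M^*)^{\theta^*}$.'' But the record property of $M^*$ only bounds $h(M')=f(M')/(M')^{\theta^*}$ for $M'\le M^*$; it says nothing directly about $h(M)$ with $M>M^*$. To conclude $h(M)\le h(M^*)$ one must argue that $\sup_{M'\le M}h(M')$ is attained, and that the argmax is then a record (hence good) point $\le M$; this requires some regularity of $f$ (e.g.\ continuity of $M\mapsto \E d_{(1,M)}(0,M)$, which is true but should be stated). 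Alternatively one can simply take $M^*$ to be the argmax of $h$ on $[1,M]$ (rather than ``the largest good point $\le M$''), which is automatically a good point and satisfies $h(M)\le h(M^*)$; continuity is still needed for the argmax to exist. Once this is noted the rest of the computation $f(M)\le (M/M^*)^{\theta^*}f(M^*)\le M^\theta/c$ goes through as you wrote it. You should also spell out that the upper bound at good $M^*$ is uniform over $M^*$ bounded away from $1$, and absorb small $M^*$ (hence small $M$) into the constant using $f(M)\le M$; you gesture at this at the end and it is indeed only bookkeeping.
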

\begin{proof}
By Lemma~\ref{lem-submultiplicative}, we know that there exists $C>0$ such that
\begin{equation}\label{eq-sub}
\E d_{(1, MM')}(0, MM') \leq C \E d_{(1, M)}(0, M) \E d_{(1, M')}(0, M') \mbox{ for all } M, M'\geq 1\,.
\end{equation}
By Lemma~\ref{lem-super-multiplicative}, there exists $\theta'<\theta$ and $c>0$ such that
\begin{equation}\label{eq-sup}
\E d_{(1, MM')}(0, MM') \geq c \E d_{(1, M)}(0, M) \E d_{(1, M')}(0, M')
\end{equation}
for all $M, M'\geq 1$ where $M$ is $(1,\theta')$-good. Now, take $\theta'<\theta^*<\theta$.  By \eqref{eq-sub}, the function $\psi^*(x) = \log \E d_{(1, \mathrm{e}^x)}(0, \mathrm{e}^x) - \theta^* x$ has an infinite number of record values, and thus there exist an infinite number of points which are $(1,\theta^*)$-good. Choose $m$ as the smallest $(1,\theta^*)$-good number such that $m^{\theta^* - \theta'} \geq (C\vee 1)/(c\wedge 1)$. For $x\geq 0$, define $\psi(x) = \log_m \E d_{(1, m^x)}(0, m^x) - \theta' x$. By \eqref{eq-sub} and \eqref{eq-sup}, we obtain that $\max_{x\in (m^k, m^{k+1})} \psi(x) \geq \max_{x\in (m^{k-1}, m^k)} \psi(x)$ for all $k\in \mathbb{N}$. Therefore, there exists $m_k\in (m^{k}, m^{k+1})$ for all $k\in \mathbb{N}$ such that $m_k$ is $(1,\theta')$-good (since there exist record values for $\psi(x)$ in each interval $(m^k, m^{k+1})$). Write $\theta_k = \log_{m_k} \E d_{(1, m_k)}(0, m_k)$. Using \eqref{eq-sub} and \eqref{eq-sup} again, we obtain that for a constant $C^\star>0$
$$|\theta_{k2^{n+1}} - \theta_{k 2^{n}}|\leq \tfrac{C^\star}{k 2^n}\,, \mbox{ for all }k, n\in \mathbb{N}\,.$$
Therefore, $|\theta_k - \theta| \leq \sum_{n=0}^\infty \tfrac{C^\star}{k 2^n} \leq 2C^\star/k$, which implies that for constants $c', C'>0$
$$c' m_k^\theta \leq  \E d_{(1, m_k)}(0, m_k) \leq C' m_k^{\theta}\,.$$
Combined with Corollary~\ref{cor-first-moment-does-not-decrease}, this completes the proof.
\end{proof}

Combining Lemmas~\ref{lem-second-moment-good} and \ref{lem-power-law}, we can bound the second moment of $d_{(1, M)}(0, M)$ for all $M>0$ as every $M$ is ``good''. Indeed, recursively adapting the proof of Lemma~\ref{lem-second-moment-good} yields the following bounds on higher moments (we do not reproduce the proof here as it is just a slight modification).
\begin{lemma}\label{lem-moments}
For $\beta>0$ and $k\in \mathbb{N}$, there exists a constant $C>0$ such that for all $M^* > 0$
$$\E (d_{(1, M^*)}(0, M^*))^k \leq C (\E d_{(1, M^*)}(0, M^*))^k\,.$$
\end{lemma}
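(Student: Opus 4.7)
The plan is to induct on $k \geq 2$. The base case $k = 2$ is Lemma~\ref{lem-second-moment-good} combined with Lemma~\ref{lem-power-law}: the latter's two-sided power law $\E d_{(1, M)}(0, M) \asymp M^\theta$ shows that every $M^* > 0$ is $(\gamma, \theta')$-good with universal constants $\gamma, \theta'$, so Lemma~\ref{lem-second-moment-good} applies to every $M^*$. For the inductive step at level $k$, I would reuse the scale-by-scale decomposition from the proof of Lemma~\ref{lem-second-moment-good}: fix $m$ large depending on $k, \beta$, write $M^* = m^n$, and set $\Gamma_j^{(k)} := \max_{s \leq m^j} \E(d_{(1, m^j)}(0, s))^k$. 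Raising the pathwise inequality \eqref{eq-crude-upper-second} to the $k$-th power via $(a+b)^k \leq 2^{k-1}(a^k + b^k)$ produces two terms to control.

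The max term $\E(\max_{i \notin A_y} X_i)^k$, with $X_i := d_{(1, m^j)}(y_i, y_{i+1})$, is handled as in the $k=2$ case: the event $\{i \notin A_y\}$ depends only on edges of length in $(m^j, m^{j+1})$ and is therefore independent of $X_i$, and the estimate on $\E|[K'] \setminus A_y|$ already established in the proof of Lemma~\ref{lem-second-moment-good} yields $\E(\max_{i \notin A_y} X_i)^k \leq \eta \, \Gamma_j^{(k)}$ with $\eta = C_\beta(m^{1-\beta} \vee \log m)$. For the sum term, the deterministic bound $(\sum_{i \neq \tau} X_i)^k \leq (K')^k (\max_{i \neq \tau} X_i)^k$ together with $K' \leq O(m)$ reduces the task to bounding the $k$-th moment of the second order statistic. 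Generalizing \eqref{eq-to-integrate}, I would use
\[
\P(\max_{i \neq \tau} X_i \geq z) \leq \sum_{i_1 \neq i_2} \P(X_{i_1} \geq z)\,\P(X_{i_2} \geq z),
\]
apply Markov's inequality $\P(X_{i_1} \geq z) \leq z^{-(k-1)} \E X_{i_1}^{k-1}$ to one factor, and integrate to obtain $\E(\max_{i \neq \tau} X_i)^k \leq k (K')^2 \max_{i_1} \E X_{i_1}^{k-1} \cdot \max_{i_2} \E X_{i_2}$. The inductive hypothesis at level $k-1$ gives $\E X_{i_1}^{k-1} \leq C_{k-1}(\E X_{i_1})^{k-1}$, which combined with Corollary~\ref{cor-first-moment-does-not-decrease} yields $\E(\sum_{i \neq \tau} X_i)^k \leq C_{k, \beta} \, m^{k+2} (\E d_{(1, m^j)}(0, m^j))^k$.

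Together these produce the recursion
\[
\Gamma_{j+1}^{(k)} \leq C_{k, \beta}\bigl(\eta \, \Gamma_j^{(k)} + m^{k+2}(\E d_{(1, m^j)}(0, m^j))^k\bigr),
\]
a direct analogue of \eqref{eq-recursion-second-moment}. Substituting the power law $\E d_{(1, m^j)}(0, m^j) \asymp m^{j\theta}$ from Lemma~\ref{lem-power-law} and iterating, the final bound reduces to the geometric condition $\eta \, m^{-k\theta} < 1/2$. For $\beta > 1$ this is automatic since $\eta = O(\log m)$ and $\theta > 0$; for $\beta \leq 1$ the bound $\theta \geq 1 - \beta$ (established in the proof of Lemma~\ref{lem-second-moment-good}) gives $\eta \, m^{-k\theta} \leq C_\beta \, m^{(1-\beta)(1-k)}$, which is $< 1/2$ for $m$ large and any $k \geq 2$. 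The geometric sum then collapses to $\Gamma_n^{(k)} \leq C_{k, \beta} \, m^{k+2} (m^{n\theta})^k \leq C'_{k, \beta}(\E d_{(1, M^*)}(0, M^*))^k$, using the lower bound from Lemma~\ref{lem-power-law} at the last step.

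The main point requiring care — and also the reason induction on $k$ proceeds cleanly — is the choice of Markov exponent when bounding the second order statistic: using $\P(X \geq z) \leq z^{-(k-1)} \E X^{k-1}$ rather than a naive multinomial expansion of $(\sum X_i)^k$ invokes only the $(k-1)$-st moment hypothesis and avoids producing a self-referential $\Gamma_j^{(k)}$ term with an $O(m)$ prefactor, which would otherwise break the geometric summation for small $\beta$.
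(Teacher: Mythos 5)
Your proposal is correct and is exactly the ``recursive adaptation'' the paper alludes to without spelling out; in particular the asymmetric Markov split — exponent $k-1$ on one factor of the second-order-statistic bound and $\int_0^\infty \P(X_{i_2}\geq z)\,dz = \E X_{i_2}$ on the other — is precisely the detail that makes the induction on $k$ close without a self-referential $\Gamma_j^{(k)}$ term, and it is the non-obvious point the paper leaves implicit. Two small imprecisions worth tidying: the bound on $K'$ is not deterministic (one needs $\E[(K')^{k+2}] = O_{k,\beta}(m^{k+2})$, which does hold since $K'$ is dominated by a Poisson count plus $O(m)$ interpolation points, and which is legitimate to pull out since $K'$ and the $X_i$'s are independent given the long edges); and at $\beta = 1$ the displayed bound $\eta m^{-k\theta} \le C_\beta m^{(1-\beta)(1-k)}$ degenerates to $O(1)$, so the case $\beta = 1$ should be folded into the $\beta > 1$ argument where $\eta = O(\log m)$ and $\theta > 0$ suffice.
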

We conclude this subsection by determining the scaling of of the continuous model.
\begin{proof}[Proof of Theorem~\ref{thm-cts}] The result on the typical distance that $d^*_{(1, n)}(0, n) \asymp_P d_{(1, n)}(0, n)   \asymp_P n^\theta$ is an immediate consequence of Lemmas~\ref{lem-distance-tight}, \ref{lem-power-law} and \ref{lem-moments}. In addition, the lower bound on the diameter trivially follows from the lower bound on the typical distance. It remains to establish the upper bound on the diameter, for which we apply a union bound. Choose $k^*\in \mathbb{N}$ such that $(4/3)^{k^*\theta} \geq 4$. Without loss of generality, we assume that $n = 2^{m}$ for $m\in \mathbb{N}$. Now, define events
$$\Omega_\ell = \cap_{j=0}^{2^{m-\ell}}\{d_{(1, n)}(j2^\ell, (j+1)2^\ell) \leq \alpha n^\theta (3/2)^{(\ell - m)\theta}\} \mbox{ and } \Omega = \cap_{\ell=0}^m \Omega_\ell\,.$$
By splitting up a path from $n_1$ to $n_2$ into dyadic segments, it is clear that on $\Omega$, we have
$$d_{(1, n)}(n_1, n_2)\leq  2 \alpha n^\theta \sum_{\ell = 0}^{m}(3/2)^{(\ell - m)\theta} \leq C_{\theta}\alpha n^\theta\,, \mbox{ for all } 1\leq n_1, n_2 \leq n\,,$$
where $C_\theta>0$ is a constant that depends only on $\theta$. Therefore, it remains to bound $\P(\Omega)$. By a simple union bound and an application of Markov's inequality with Lemma~\ref{lem-moments}, we obtain that
$$\P(\Omega^c) \leq \sum_{\ell = 1}^m 2^{m-\ell} \P(d_{(1, n)}(0, 2^\ell) > \alpha n^\theta (3/2)^{(\ell - m)\theta}) \leq \sum_{\ell=1}^m 2^{m-\ell} \tfrac{C_{ \beta, k^*}}{\alpha} 4^{\ell - m}\,,$$
where we have used the choice that $(4/3)^{k^*\theta} \geq 4$. Moreover, $C_{\beta, k^*}>0$ is a constant coming from Lemma~\ref{lem-moments} and it only depends on $\beta$ and $k^*$. Therefore, $\P(\Omega)\leq 2 C_{\beta, k^*}/\alpha$, completing the proof by sending $\alpha\to\infty$.
\end{proof}

\section{Coupling the discrete and continuous models}\label{s:ContinuousToDiscrete}
In this subsection, we show that the long range percolation model is well approximated by our continuous model, thereby providing a proof of Theorem~\ref{thm-1}.
\begin{proof}[Proof of Theorem~\ref{thm-1}] We write the proof for the distance $d_{\mathrm{LRP}}(0, n)$, while the proof for $d^*_{\mathrm{LRP}}(0, n)$ and the diameter follow in the same manner.
Fix $\beta>0$. We consider the discrete model $d_{\mathrm{LRP}}(1, n)$ and the continuous model $d_{(1, n)}(1, n)$.
For each pair $(k, \ell)$, define
\begin{align*}
E^{\mathrm{dsc}}_{k, \ell} &\deq \{ \mbox{ the edge }\langle k, \ell\rangle \mbox{ occurs in the discrete model}\}\,,\\
E^{\mathrm{cts}}_{k, \ell} &\deq \{ \mbox{an edge } \langle x, y\rangle \mbox{ occurs in the continuous model for }x\in(k-\tfrac{1}{2}, k+\tfrac{1}{2}) \mbox{ and } y\in (\ell-\tfrac{1}{2}, \ell+\tfrac{1}{2}) \}\,.
\end{align*}
Denote by $N_{k, \ell}$ the number of edges of form $(x, y)$ that occurs in the continuous model for $x\in(k-\tfrac{1}{2}, k+\tfrac{1}{2})$ and  $y\in (\ell-\tfrac{1}{2}, \ell+\tfrac{1}{2})$. We compute that
\begin{align}\label{eq-N-k-ell}
\E N_{k, \ell} &= \beta \int_{k-1/2}^{k+1/2} \int_{\ell - 1/2}^{\ell+1/2} \frac{1}{(x-y)^2}  dy dx \nonumber \\
&= \beta \int_{k-1/2}^{k+1/2}\left| \frac{1}{x - (\ell - \tfrac12)} - \frac{1}{x - (\ell + \tfrac12)}\right| dx = \beta \log \frac{(k-\ell)^2}{(k-\ell)^2-1}\,.
\end{align}
We can construct a coupling of the continuous and discrete models each of the pairs of events  $E^{\mathrm{dsc}}_{k, \ell}$ and $E^{\mathrm{cts}}_{k, \ell}$ in such a way that this collection of pairs is independent. For two probability measures on $\Omega$, denote the total variation distance between $\mu$ and $\nu$ by $\|\mu - \nu\|_{\mathrm{TV}} = \sup_{A\subset \Omega}|\mu(A) - \nu(A)|$. It is a well-known fact that there exists a coupling $(X, X')$ with $X\sim \mu$ and $X'\sim \nu$ such that $\P(X\neq X') = \|\mu - \nu\|_{\mathrm{TV}}$.  Using this fact and by \eqref{eq-N-k-ell} we could make the coupling such that
\begin{equation}\label{eq-coupling-c-d-prime}
\P(E^{\mathrm{dsc}}_{k, \ell} \setminus E^{\mathrm{dsc}}_{k, \ell}) + \P(E^{\mathrm{dsc}}_{k, \ell} \setminus E^{\mathrm{dsc}}_{k, \ell}) = \|\mathrm{Ber}(\tfrac{\beta}{(k-\ell)^2}) - \mathrm{Poi}(\beta \log \tfrac{(k-\ell)^2}{(k-\ell)^2-1})\|_{\mathrm{TV}} \leq  C_1(\beta + \beta^2) \tfrac{1}{(k-\ell)^4}\,,
\end{equation}
where we used $\mathrm{Ber}(x)$ and $\mathrm{Poi}(x)$ denote Bernoulli and Poisson variables with expectation $x$ for $x \geq 0$, and $C_1>0$ is an absolute constant.
Thus, we have
\begin{equation}\label{eq-coupling-c-d}
\P((E^{\mathrm{dsc}}_{k, \ell})^c \mid E^{\mathrm{cts}}_{k, \ell}) \leq C_2(\beta + \beta^2) \tfrac{1}{(k-\ell)^2}\,, \mbox{ and } \P((E^{\mathrm{cts}}_{k, \ell})^c \mid E^{\mathrm{dsc}}_{k, \ell}) \leq C_2(\beta + \beta^2) \tfrac{1}{(k-\ell)^2}\,,
\end{equation}
where $C_2>0$ is an absolute constant.

Now, consider a geodesic $P^*$ in $d_{\mathrm{LRP}}(1, n)$. We construct a path $P$ connecting $1$ and $n$ in the continuous model in accordance to geodesic $P^*$ such that
\begin{itemize}
\item  For $\langle k, \ell\rangle \in P^*$, if $E^{\mathrm{cts}}(k, \ell)$ occurs we use a corresponding edge for $P$ and walk on the underlying Euclidian line to incorporate the walk between endpoints.
\item For $\langle k, \ell\rangle \in P^*$, if $E^{\mathrm{cts}}(k, \ell)$ does not occur, then in $P$ we just walk on the underlying Euclidian line from $k$ to $\ell$.
\end{itemize}
By construction, we obtain that
$$\|P\|_1 \leq \|P^*\|_{\mathrm{LRP}} + \sum_{(k, \ell)\in P^*} |k-\ell| (1-\one_{E^{\mathrm{cts}}(k, \ell)})\,.$$
Using the aforementioned coupling between the discrete and the continuous model such that \eqref{eq-coupling-c-d} holds, we obtain that (denoting by $\mathcal F_{\mathrm{dis}}$  the $\sigma$-field generated by all the information in the discrete model)
\[
\E \Big[\sum_{\langle k, \ell\rangle\in P^*} |k-\ell| (1-\one_{E^{\mathrm{cts}}(k, \ell)}) \mid \mathcal F_{\mathrm{dis}}\Big] \leq \E \Big[ \sum_{\langle k, \ell\rangle\in P^*} |k-\ell| \frac{C_3(\beta + \beta^2)}{(k-\ell)^2}\mid \mathcal F_{\mathrm{dis}}\Big] \leq C_4(\beta + \beta^2)\|P^*\|_{\mathrm{LRP}}\,,
\]
where $C_3, C_4$ are two absolute constants. 
Hence for all $\epsilon>0$ there exists $C_\epsilon>0$ such that
$$\P(\|P\|_1 \leq C_\epsilon \|P^*\|_{\mathrm{LRP}}) \geq 1-\epsilon \,.$$
Since $ d_{(1, n)}(0, n) \leq \|P\|_1$ we have that,
\begin{equation}\label{eq-cts-dis}
\P(d_{(1, n)}(0, n) \leq C_\epsilon d_{\mathrm{LRP}}(0, n)) \geq 1-\epsilon \,.
\end{equation}

Next, let $P$ be the geodesic in $d_{(1, n)}(1, n)$ and let $h(P)$ be the number of hops in the geodesic.  We construct a path $P^*$ connecting $1$ and $n$ in the discrete model according to $P$ such that
\begin{itemize}
\item  For $\langle x, y\rangle \in P$ with $x\in (k-1/2, k+1/2)$ and $y\in (\ell - 1/2, \ell+1/2)$, if $E^{\mathrm{dsc}}(k, \ell)$ occurs we use a corresponding edge for $P^*$ and walk on the underlying Euclidian line to incorporate the deformation of the edge.
\item For $\langle k, \ell\rangle \in P$ with $x\in (k-1/2, k+1/2)$ and $y\in (\ell - 1/2, \ell+1/2)$, if $E^{\mathrm{dsc}}(k, \ell)$ does not occur, then in $P^*$ we just walk on the underlying Euclidian line from $k$ to $\ell$.
\end{itemize}
By construction, we obtain that
$$\|P^*\|_{\mathrm{LRP}} \leq d_{(1, n)}(1, n) + h(P) +  \sum_{(k, \ell) \in P^\star} |k-\ell| (1-\one_{E^{\mathrm{cts}}(k, \ell)})\,,$$
where $P^\star = \{(k, \ell): \mbox{ there exists } \langle x, y\rangle\in P \mbox{ with } x\in (k-1/2, k+1/2)  \mbox{ and } y\in (\ell - 1/2, \ell+1/2)\}$.
Applying \eqref{eq-coupling-c-d} and Lemma~\ref{lem-exploration} and  employing an analogous derivation to \eqref{eq-cts-dis}, we deduce that for all $\epsilon>0$ there exists $C_\epsilon>0$ such that
$$\P(\|P^*\|_{\mathrm{LRP}} \leq C_\epsilon \|P\|_1) \geq 1-\epsilon \,.$$
Since $d_{\mathrm{LRP}}(0, n) \leq \|P^*\|_{\mathrm{LRP}}$, we have that
$$\P(d_{\mathrm{LRP}}(0, n) \leq C_\epsilon d_{(1, n)}(0, n)) \geq 1-\epsilon \,.$$
Combined with \eqref{eq-cts-dis} and Theorem~\ref{thm-cts}, it gives that $d_{\mathrm{LRP}}(0, n)  \asymp_P  d_{(1,n)}(0, n)  \asymp_P n^\theta$.
which completes the proof.
\end{proof}

\end{document}